\documentclass[11pt,a4paper,english]{article}
\usepackage{amssymb}
\usepackage{amsfonts}
\usepackage[all]{xy}
\usepackage{amsfonts,amscd,amssymb,amsthm}
\usepackage{amsfonts,amssymb,amsmath,amsxtra,amscd,amsthm,eucal,graphicx,graphics}
\usepackage{tikz}
\usepackage{enumerate}
 \usepackage[T1]{fontenc}
\usepackage[utf8]{inputenc}
\usepackage{babel}
\newcommand{\overbar}[1]{\mkern 1.5mu\overline{\mkern-1.5mu#1\mkern-1.5mu}\mkern 1.5mu}
\newcommand{\cupdot}{\mathbin{\mathaccent\cdot\cup}}

    \oddsidemargin  0.1in
    \evensidemargin 0.1in
    \textwidth      6.0in
    \headheight     0.0in
    \topmargin      0.0in
    \textheight=8.0in

\theoremstyle{definition}
\newtheorem{theorem}{Theorem}[section]
\newtheorem{remark}[theorem]{Remark}

\newtheorem{lemma}[theorem]{Lemma}

\newtheorem{conjecture}[theorem]{Conjecture}

 \title{On the roots of $\sigma$-polynomials}
    \author{Jason Brown\footnote{E-mail: Jason.Brown@dal.ca} \and Aysel Erey\footnote{E-mail: aysel.erey@gmail.com}}
    \date{Department of Mathematics and Statistics\\ Dalhousie University \\ Halifax, Nova Scotia, Canada B3H 3J5 \\[\baselineskip] \today }

\begin{document}

\maketitle

\begin{abstract}
Given a  graph $G$ of order $n$, the {\em $\sigma$-polynomial} of $G$ is the generating function $\sigma(G,x) = \sum a_{i}x^{i}$ where $a_{i}$ is the number of partitions of the vertex set of $G$ into $i$ nonempty independent sets. Such polynomials arise in a natural way from chromatic polynomials. Brenti \cite{brenti} proved that $\sigma$-polynomials of graphs with chromatic number at least $n-2$ had all real roots, and conjectured the same held for chromatic number $n-3$. We affirm this conjecture.
\end{abstract}

\thanks{\textit{Keywords}:
$\sigma$-polynomial,  real roots, chromatic number, chromatic polynomial,  compatible polynomials}

\section{Introduction}
Let $G$ be a graph of order $n$ with chromatic number $\chi(G)$. The {\em $\sigma$-polynomial} of $G$ (see \cite{brenti}) is defined as the polynomial
\[ \sigma(G,x) = \sum_{i=\chi(G)}^{n} a_{i}x^{i}\]
where $a_{i}$ denotes the number of partitions of the vertex set of $G$ into $i$ nonempty independent sets. 
The coefficients $a_i$ are also known as the \textit{graphical Stirling numbers}
\cite{duncan,galvin}. If a graph has no edges then $a_i$ is simply equal to the Stirling number of the second kind $S(n,i)$. 

These polynomials first arose in the study of chromatic polynomials, since the chromatic polynomial of $G$ is $\sum a_{i}(x)_{\downarrow i}$, where $(x)_{\downarrow i} = x(x-1) \cdots (x-i+1)$ is the {\em falling factorial of $x$} (the sequence $\langle a_i \rangle$ has been called the \textit{chromatic vector of G} \cite{goldman}). 
The $\sigma$-polynomial was first introduced by Korfhage \cite{korfhage} in a slightly different form (he refers to the polynomial $ (\sum_{i=\chi(G)}^n a_i x^i )/ x^{ \chi(G)}$ as the $\sigma$-polynomial), and $\sigma$-polynomials have attracted considerable attention in the literature. Brenti \cite{brenti} studied the $\sigma$-polynomials extensively and investigated both log-concavity and the nature of the roots. Chv\'{a}tal \cite{chvatal} gave a necessary condition for a subsequence of the chromatic vector to be nondecreasing. Brenti, Royle and Wagner \cite{royle} proved that a variety of conditions are sufficient for a $\sigma$-polynomial to have only real roots.

The $\sigma$-polynomial and its coefficients have connections to other graph polynomials and combinatorial structures as well. The partition polynomial studied by Wagner \cite{wagner} reduces to a $\sigma$-polynomial and the $\sigma$-polynomial of the complement of a triangle free graph is just the well known matching polynomial \cite{matching}. The authors in \cite{goldman} investigate the rook and chromatic polynomials, and prove that every rook vector is a chromatic vector. In \cite{duncan} the authors explore relations among the $\sigma$-polynomial, chromatic polynomial, and the Tutte polynomial, and implications of these connections. A result on the ordinary Stirling numbers was generalized in \cite{galvin} by considering the $\sigma$-polynomials of some graph families. Moreover, studying $\sigma$-polynomials is useful to find chromatically equivalent or chromatically unique graph families \cite{recursion,zhao}. Recently, in \cite{ereyrealpart}, the authors obtained upper bounds for the real parts of the roots of chromatic polynomials for graphs with large chromatic number by investigating the $\sigma$-polynomials of such graphs. 
 
It is known that $\sigma$-polynomials of several graph families such as chordal graphs and incomparability graphs have only real roots \cite{wagner}. However, $\sigma$-polynomials do not always have only real roots. In \cite{royle}, the authors exhibit all graphs of orders $8$ and $9$ whose $\sigma$-polynomials have nonreal roots (on the other hand, the $\sigma$-polynomial of every graph of order at most $7$ has all real roots). Brenti \cite{brenti} proved that $\sigma$-polynomials of all graphs of order $n$ with chromatic number at least $n-2$ have all real roots, and proposed the following:
 
\begin{conjecture}\cite{brenti} If $G$ is a graph of order $n$ and $\chi(G)\geq n-3$, then $\sigma(G,x)$ has only real roots.
\end{conjecture}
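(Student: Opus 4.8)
The plan is to reduce the conjecture, through multiplicativity of $\sigma$ over joins and a structural analysis of $\overline G$, to a bounded collection of elementary polynomial inequalities. Since every singleton set is independent, $a_n=1$, so $\sigma(G,x)=x^{\chi(G)}\,q(x)$ where $q$ is monic of degree $n-\chi(G)\le 3$ with nonnegative coefficients. If $n-\chi(G)\le 2$ the factor $q$ is linear or quadratic and Brenti's theorem for graphs with $\chi(G)\ge n-2$ already gives real roots, so we may assume $\chi(G)=n-3$; the entire content is then that the monic cubic $q(x)=x^{3}+a_{n-1}x^{2}+a_{n-2}x+a_{n-3}$ has nonnegative discriminant, which (after depressing it) is an inequality of the shape $-4p^{3}-27r^{2}\ge 0$ in the three coefficients.

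Next I would use that a partition of $V(G_{1}\ast G_{2})$ into independent sets is just a pair of such partitions of $V(G_{1})$ and of $V(G_{2})$, so $\sigma(G_{1}\ast G_{2},x)=\sigma(G_{1},x)\,\sigma(G_{2},x)$, while $|V|-\chi$ is additive over joins. Writing $G$ as the join of its join-irreducible pieces (the complements of the connected components of $\overline G$), a product of real-rooted polynomials is real-rooted iff each factor is, and the nonnegative ``defects'' $|V(G_{i})|-\chi(G_{i})$ sum to $3$; if all are at most $2$ we are done by Brenti, and otherwise exactly one factor carries the whole defect $3$ while the others are complete graphs, whose $\sigma$-polynomials are powers of $x$. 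Thus we may assume $\overline G$ is connected (this also subsumes the remark that a vertex universal to $G$ merely multiplies $\sigma$ by $x$).

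The technical heart is to classify the connected graphs $H=\overline G$ with clique cover number $\theta(H)=\chi(G)=n-3$, that is, for which the maximum of $\sum(|Q_{i}|-1)$ over families of pairwise vertex-disjoint cliques $Q_{i}$ of $H$ equals $3$. I would split on $\omega(H)\in\{2,3,4\}$ (the value cannot exceed $4$, since a $K_{5}$ already forces defect $\ge 4$). When $H$ is triangle-free, $\theta(H)=n-\nu(H)$, so $\nu(H)=3$, and a maximum matching yields six ``core'' vertices together with an independent set of vertices whose neighbourhoods all lie in the core; when $\omega(H)\in\{3,4\}$ a clique of that size already realizes the whole defect, which pins down the rest of $H$ tightly (for $\omega(H)=4$ the vertices off the $K_{4}$ are pairwise nonadjacent and attach to it in very restricted ways). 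In each case $H$ is obtained from a \emph{bounded} core graph by blowing up vertices into independent sets of mutual false twins; dually, $G$ comes from a bounded graph by blowing up vertices into cliques of mutual true twins. A deletion/identification computation then shows that replacing a core vertex $v$ by $t$ true twins changes $\sigma$ by an operation whose effect on the coefficients is polynomial in $t$ (of degree at most the number of vertices not dominated by $v$); iterating over the boundedly many blown-up vertices gives $\sigma(G,x)=x^{N}P(x;t_{1},\dots,t_{r})$ with $N=\sum t_{j}$, where each coefficient of the cubic $P$ is an explicit polynomial in the multiplicities $t_{j}$ coming from $\sigma$-values of subgraphs of the core. For each of the finitely many cores the conjecture thus becomes a single polynomial inequality in the $t_{j}$ — nonnegativity of the discriminant of $P$ — or, in the language of the keywords, the assertion that certain explicit finite families of real-rooted polynomials are \emph{compatible}, which can be settled by checking pairwise compatibility (interlacing for consecutive degrees).

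I expect the classification in the third paragraph to be the main obstacle: one must enumerate every configuration of connected graphs of clique cover number $n-3$ and isolate a genuinely finite, tractable list of cores, since everything downstream is then a mechanical if lengthy verification of discriminant inequalities. The triangle-free case, with its six core vertices and many possible ``pendant'' twin classes, and the $\omega(\overline G)=4$ case, where a single $K_{4}$ absorbs the entire defect, are where a missed configuration is most likely to hide.
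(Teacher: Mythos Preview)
Your overall arc---reduce via multiplicativity of $\sigma$ over joins to the case where $\overline G$ is connected, classify the possible complements, then verify real-rootedness family by family (ultimately through discriminants or, equivalently, compatibility of a finite list of real-rooted polynomials)---is exactly the paper's. The differences are in how the classification is organized and how much computation is avoided. The paper imports the classification ready-made from Li: $G\cong\overline H\vee K_{n-r}$ with $\overline H$ either a \emph{proper $3$-star} (meaning $\alpha_0(\overline H)=3$, so three ``core'' vertices and the rest partitioned into at most seven twin classes by their neighbourhood in the cover) or a member of three explicit small families $F,S,L$. It then dispatches almost every case by structural shortcuts---triangle-free complement $\Rightarrow$ matching polynomial, chordal $G$, complement a comparability graph, order $\le 7$, clique cutset---leaving a single hard subcase of the proper $3$-star, where a deletion recursion expresses $\sigma(G,x)$ as a nonnegative combination of four explicit quadratics (times a power of $x$) whose pairwise compatibility is checked directly. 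You instead organize by $\omega(\overline G)\in\{2,3,4\}$ and plan to extract a bounded core with twin multiplicities in every branch, then test a discriminant inequality in those multiplicities. This is viable, but notice that your $\omega=2$ branch is precisely the triangle-free case where the matching-polynomial theorem gives the result instantly (no six-vertex core or pendant analysis needed), and much of your $\omega\in\{3,4\}$ work is absorbed in the paper by the chordal, comparability, and clique-cutset lemmas rather than by algebra. The paper's route buys substantial economy by leaning on known real-rootedness theorems; yours trades that for a uniform template, at the cost of a longer case list and heavier discriminant computations---and, as you correctly anticipate, the classification itself is where the real work and the real risk lie.
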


\noindent In this paper we will prove Brenti's conjecture.

\section{Background on $\sigma$-polynomials}
In this section we summarize a number of known results on $\sigma$-polynomials that we will make use of in the sequel. For graph theory terminology, we follow \cite{westbook} in general.

Let $G$ and $H$ be two graphs. We denote the \textit{union} of $G$ and $H$ by $G\cup H$ and the \textit{disjoint union} of $G$ and $H$ by $G\cupdot H$ (for positive integer $l$, $lG$ denotes the disjoint union of $l$ copies of $G$). The \textit{join} of $G$ and $H$, denoted by $G\vee H$, is the graph whose vertex set is $V(G)\cup V(H)$ and edge set is $E(G)\cup E(H)\cup \{uv| \ u\in V(G) \ $and$ \  v\in V(H)\}$. In the following theorem we present some useful properties of $\sigma$-polynomials under these graph operations.

\begin{theorem}\cite{brenti,royle}
Let $G$ and $H$ be two graphs. Then,
\begin{enumerate}[(i)]
\item\label{join} $\sigma(G\vee H,x)=\sigma(G,x)\sigma(H,x),$
\item\label{disjointunion} If $\sigma(G,x)$ and $\sigma(H,x)$ have only real roots, then $\sigma(G\cupdot H, x)$ has also only real roots,
\item\label{cutset} If $\sigma(G,x)$ and $\sigma(H,x)$ have only real roots and  $G\cap H$ is a complete graph, then $\sigma(G\cup H,x)$  has only real roots.
\end{enumerate}
\label{operations}
\end{theorem}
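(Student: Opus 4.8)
The plan is to treat the three parts in the order (i), then (iii) reduced to (ii), and finally (ii), which carries all of the analytic content.

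For (i) I would argue purely combinatorially. In $G\vee H$ every vertex of $G$ is adjacent to every vertex of $H$, so any set independent in $G\vee H$ lies entirely in $V(G)$ or entirely in $V(H)$; hence a partition of $V(G\vee H)$ into independent sets is exactly a partition of $V(G)$ into sets independent in $G$ together with a partition of $V(H)$ into sets independent in $H$, with no block meeting both sides. If these use $i$ and $j$ blocks, the combined partition uses $i+j$ blocks, so $a_k(G\vee H)=\sum_{i+j=k}a_i(G)a_j(H)$, which is precisely the coefficient rule for the product $\bigl(\sum_i a_i(G)x^i\bigr)\bigl(\sum_j a_j(H)x^j\bigr)$.

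Next, for (iii), I would reduce to (ii). Put $r=|V(G\cap H)|$, so $G\cap H=K_r$; since $K_r\subseteq G,H$ we have $\chi(G),\chi(H)\ge r$, whence $x^r$ divides both $\sigma(G,x)$ and $\sigma(H,x)$. A direct count of the partitions of $V(G\cup H)$ into independent sets — each block meets $V(K_r)$ in at most one vertex, the $r$ blocks that meet it are forced to be the blocks of the clique-vertices, and the remaining blocks coming from $G$ and from $H$ may be joined by an arbitrary partial matching because there are no edges between $V(G)\setminus V(K_r)$ and $V(H)\setminus V(K_r)$ — gives, after reindexing, the identity
\[
\sigma(G\cup H,x)\;=\;x^{\,r}\Bigl[\,\tfrac{\sigma(G,x)}{x^{\,r}}\ \boxplus\ \tfrac{\sigma(H,x)}{x^{\,r}}\,\Bigr],
\]
where $p\boxplus q$ is the bilinear operation on polynomials with nonnegative coefficients given by $(p\boxplus q)_k=\sum_{i,j}p_iq_j\sum_{\ell:\,i+j-\ell=k}\binom i\ell\binom j\ell\,\ell!$\,; taking $r=0$, $\boxplus$ is exactly the operation sending $\bigl(\sigma(G'),\sigma(H')\bigr)$ to $\sigma(G'\cupdot H')$. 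As multiplication by $x^r$ preserves having only real roots, (iii) follows once $\boxplus$ is shown to preserve this property — which is (ii).

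For (ii), the plan is to pass to exponential generating functions. Using $P(\Gamma,m)=\sum_i a_i(\Gamma)(m)_{\downarrow i}$ one checks the identity $F_\Gamma(x):=\sum_{m\ge0}P(\Gamma,m)\,x^m/m!=e^{x}\sigma(\Gamma,x)$. Since $\sigma(\Gamma,x)$ has nonnegative coefficients, it is real-rooted iff all its roots are real and $\le 0$, iff $F_\Gamma$ lies in the Laguerre--P\'olya class of entire functions that are locally uniform limits of real polynomials with only real nonpositive zeros, iff $\{P(\Gamma,m)\}_{m\ge0}$ is a multiplier sequence of the first kind (P\'olya--Schur). Now $F_{G\cupdot H}(x)=\sum_m P(G,m)P(H,m)\,x^m/m!$ arises from $F_G$ and $F_H$ by multiplying the sequences $\{P(G,m)\}$ and $\{P(H,m)\}$ termwise, and multiplier sequences of the first kind are closed under termwise products (applying one after the other is applying the product). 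Hence $\{P(G\cupdot H,m)\}$ is a multiplier sequence, $F_{G\cupdot H}$ lies in the Laguerre--P\'olya class, and $\sigma(G\cupdot H,x)=e^{-x}F_{G\cupdot H}(x)$ has only real roots. This proves (ii), and with it (iii).

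The main obstacle is the lone non-elementary ingredient of (ii): the equivalence ``$\sigma(\Gamma,x)$ real-rooted $\iff$ $\{P(\Gamma,m)\}$ a multiplier sequence'' together with closure under termwise products. I would supply it via the P\'olya--Schur characterization of multiplier sequences, or equivalently via a Borcea--Br\"and\'en-type stability criterion for the linear differential operator $p\mapsto p\boxplus\sigma(H,x)$, whose symbol works out to $e^{xy}\,\sigma\!\bigl(H,x(1+y)\bigr)$. Everything else — part (i), the reduction of (iii) to (ii), and the identity $F_\Gamma=e^{x}\sigma(\Gamma,x)$ — is routine bookkeeping.
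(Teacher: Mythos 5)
Your proposal is correct, but note that the paper does not prove this theorem at all --- it is quoted as background from Brenti and from Brenti--Royle--Wagner --- so the comparison is really with those sources. Part (i) is the standard factoring of partitions across a join and matches everyone's argument. For (ii), your route through the identity $\sum_{m\ge 0}P(\Gamma,m)x^m/m! = e^x\sigma(\Gamma,x)$, the P\'olya--Schur characterization of multiplier sequences of the first kind, and closure of such sequences under termwise products (since $P(G\cupdot H,m)=P(G,m)P(H,m)$) is essentially Brenti's original proof, and all the steps check: the falling-factorial product identity $(m)_{\downarrow i}(m)_{\downarrow j}=\sum_\ell\binom{i}{\ell}\binom{j}{\ell}\ell!\,(m)_{\downarrow i+j-\ell}$ is exactly what makes your $\boxplus$ the Hadamard product on the sequence side. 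Your treatment of (iii) is a genuinely tidy repackaging: the clique-cutset count via partial matchings of the non-clique blocks is right (each block meets $V(K_r)$ in at most one vertex, the $r$ clique blocks merge in a forced way, and unions of non-clique blocks from the two sides are independent because no edges cross), and it reduces (iii) to the statement that $\boxplus$ preserves real-rootedness of arbitrary polynomials with nonnegative coefficients. One small logical point you should make explicit: (ii) as stated only covers $\sigma$-polynomials of graphs, whereas your reduction applies $\boxplus$ to $\sigma(G,x)/x^r$ and $\sigma(H,x)/x^r$, which need not be $\sigma$-polynomials of any graph; fortunately your multiplier-sequence proof of (ii) never uses the graph structure, only nonnegativity of coefficients and real-rootedness, so you should simply state and prove that polynomial-level lemma and derive both (ii) and (iii) from it. In the literature (iii) is usually obtained from Wagner's results on partition polynomials of simplicial complexes glued along a simplex, so your unified derivation of (ii) and (iii) from a single operation is a nice economy.
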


For two graphs $H$ and $G$, we denote by $\eta_G(H)$ the number of subgraphs of $G$ which are isomorphic to $H$. For example, if $G=K_4$ then we have $\eta_G(K_2)=6$, $\eta_G(2K_2)=3$, $\eta_G(K_3)=4$ and $\eta_G(K_3\cupdot K_2)=0$. Let $G$ be a graph whose $\sigma$-polynomial is 
 $$\sigma(G,x)=\sum_{i=\chi(G)}^n a_i x^i.$$ For every  partition $\sum_{j=1}^km_j=i$ of a positive integer $i$, we associate a disjoint union of complete graphs $\cupdot_{j=1}^kK_{{m_j}+1}$, an $i^{th}$ \textit{generation forbidden subgraph} \cite{li} (they are ``forbidden'' as the complement of any graph with chromatic number 
$n-k$ cannot contain any $(n-k-1)^{th}$ generation forbidden graph as a subgraph). 
For a partition of the $n$ vertices of $G$ into $n-i$ nonempty colour classes ($i \geq 1$), by ignoring the singleton classes, we see that 
$a_{n-i}$ counts the number of subgraphs of the form $\cupdot_{j=1}^kK_{{m_j}+1}$ in $\overbar{G}$ where $\sum_{j=1}^km_j=i$ and $m_j\in \mathbb{Z}^+$. This fact was also observed by several other authors (see, for example, \cite{li,read}) and we will use it frequently in the next section. From this observation, we find that 
\begin{eqnarray*}
a_{n} & = & 1,\\
a_{n-1} & = & \eta_{\overbar{G}}(K_2) ~~ = ~~{n \choose 2} - |E(G)|,\\
a_{n-2} & = & \eta_{\overbar{G}}(K_3)+\eta_{\overbar{G}}(2K_2),\\
a_{n-3} & = & \eta_{\overbar{G}}(K_4)+\eta_{\overbar{G}}(K_3\cupdot \noindent K_2)+\eta_{\overbar{G}}(3K_2), \\
a_{n-4} & = & \eta_{\overbar{G}}(K_5)+\eta_{\overbar{G}}(K_4\cupdot K_2)+\eta_{\overbar{G}}(2K_3)+\eta_{\overbar{G}}(K_3\cupdot 2K_2)+\eta_{\overbar{G}}(4K_2).
\end{eqnarray*}

\begin{table}
\begin{center}
\begin{tabular}{|l|l|}
\hline

Partition of $4$   & Associated $4^{th}$ generation forbidden subgraph \\
\hline
$4$      & $K_5$      \\
$3+1$         & $K_4\cupdot K_2$            \\
$2+2$    & $2K_3$        \\
$2+1+1$     & $K_3\cupdot 2K_2$         \\
$1+1+1+1$ & $4K_2$       \\
\hline
\end{tabular}
\end{center}
\label{table}
\caption{Fourth generation forbidden subgraphs}
\end{table}

The \textit{matching polynomial} $m(G,x)$ of a graph $G$ is defined as $$m(G,x)=\sum_{i\geq 0}\eta_G(iK_2)x^i,$$ where $\eta_G(0K_2)\equiv 1$ by convention, and this polynomial is well known to have only real roots   \cite{matching}. An important consequence is that if $G$ is a triangle-free graph then $\sigma(\overbar{G},x)=x^nm(G,1/x)$ and hence $\sigma(\overbar{G},x)$ has only real roots. 

If $\mathcal{F}$ is 
a finite set system (that is a collection of finite sets, called \textit{blocks})  then its \textit{partition polynomial} $\rho(\mathcal{F},x)$ is defined as $$\rho(\mathcal{F},x)=\sum_{i\geq 1} a_i(\mathcal{F})x^i$$ where $a_i(\mathcal{F})$ is the number of ways to partition the vertex set of $\mathcal{F}$ (that is $\cup_{A\in \mathcal{F}}A$) into $i$ nonempty blocks \cite{wagner}. The {\em independence complex} of a graph $G$ is the simplicial complex (that is a collection of sets, called {\em faces}, closed under containment -- see \cite{brownbook}, for example) on the vertex set of $G$ whose faces correspond to independent sets of the graph. Thus the partition polynomial of the independence complex of a graph is equal to the  $\sigma$-polynomial of the graph. A graph is called \textit{chordal} if it does not contain a cycle of order $4$ or more as an induced subgraph. The \textit{comparability graph} of a partially ordered set $(V,\preceq)$ has vertex set $V$ and has an edge $uv$ whenever $u\preceq v $
or $v\preceq u $; a graph is called a \textit{comparability graph} if it is the comparability graph of some partial order. In \cite{wagner} is was shown that the partition polynomial of the independence complex of a chordal graph or the complement of a comparability graph has only real roots. Hence, the same is true for the $\sigma$-polynomials of such graphs.

Computer aided computations show that $\sigma$-polynomials of all graphs of order at most $7$ have only real roots \cite{royle}. Also, Brenti \cite{brenti} showed that all graphs $G$ with $\chi(G)\geq n-2$ has only real roots. In the following theorem we summarize all these results.

\begin{theorem}\cite{brenti,royle,matching,wagner} If graph $G$ has any of the following properties then $\sigma(G,x)$ has only real roots:
\begin{enumerate}[(i)]\label{families}
\item\label{order7} $G$ has order at most $7$.
\item\label{brenti n-2}
$G$ has order $n$ with $\chi(G)\geq n-2$. 
\item\label{chordal} $G$ is chordal.
\item\label{matching theorem}
$\overbar{G}$ is triangle-free.
\item\label{comparability}
$\overbar{G}$ is a comparability graph.
\end{enumerate}
\end{theorem}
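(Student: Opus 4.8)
The plan is to treat the five sufficient conditions separately, since each is essentially independent: three of them follow at once from material already assembled in the Background section, one is a finite verification, and only condition \eqref{brenti n-2} requires a genuine argument.

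Conditions \eqref{chordal}, \eqref{matching theorem} and \eqref{comparability} I would dispatch immediately. For \eqref{matching theorem}, if $\overbar{G}$ is triangle-free then reading off $a_{n-i}=\eta_{\overbar{G}}(iK_2)$ (no forbidden subgraph with a $K_3$ can occur) gives the identity $\sigma(G,x)=x^{n}\,m(\overbar{G},1/x)$, exhibiting $\sigma(G,x)$ as the reverse of the matching polynomial; its nonzero roots are the reciprocals of the roots of $m(\overbar{G},x)$, which are all real \cite{matching}, so $\sigma(G,x)$ is real-rooted. For \eqref{chordal} and \eqref{comparability}, $\sigma(G,x)$ equals the partition polynomial of the independence complex of $G$, so Wagner's theorem that this partition polynomial is real-rooted whenever $G$ is chordal or $\overbar{G}$ is a comparability graph applies verbatim \cite{wagner}. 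Condition \eqref{order7} is a purely finite statement: enumerate the graphs on at most $7$ vertices up to isomorphism and check each $\sigma$-polynomial, which is the computation recorded in \cite{royle}.

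The substantive part is \eqref{brenti n-2}. Write $\chi(G)=n-k$ with $k\in\{0,1,2\}$, so that $\sigma(G,x)=x^{\,n-k}q(x)$ where $q(x)=\sum_{i=0}^{k}a_{n-k+i}x^{i}$ has degree $k$ and leading coefficient $a_{n}=1$. For $k=0$ we get $\sigma(G,x)=x^{n}$, and for $k=1$ we get $q(x)=x+a_{n-1}$, both manifestly real-rooted. The only case with content is $k=2$, where $q(x)=x^{2}+a_{n-1}x+a_{n-2}$ and real-rootedness amounts exactly to the discriminant inequality $a_{n-1}^{2}\ge 4a_{n-2}$, that is $\eta_{\overbar{G}}(K_2)^{2}\ge 4\bigl(\eta_{\overbar{G}}(K_3)+\eta_{\overbar{G}}(2K_2)\bigr)$.

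To establish this I would exploit the structure forced by $\chi(G)=n-2$, which says precisely that $\overbar{G}$ can be partitioned into $n-2$ cliques but no fewer; equivalently, $\overbar{G}$ contains a $K_3$ or a $2K_2$ yet contains none of the third-generation forbidden subgraphs $K_4$, $K_3\cupdot K_2$, $3K_2$. If $\overbar{G}$ is triangle-free this case is already covered by \eqref{matching theorem}, so I may assume $\overbar{G}$ contains a triangle $T$. Then $K_3\cupdot K_2$-freeness forces every edge of $\overbar{G}$ to meet $T$ (in particular there are no edges among the vertices off $T$), $K_4$-freeness forbids a vertex adjacent to all of $T$, and $3K_2$-freeness caps the matching number at $2$; together these pin $\overbar{G}$ down to a small, explicitly describable family, a triangle carrying pendant edges on a three-element vertex cover. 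On such a family $\eta_{\overbar{G}}(K_2)$, $\eta_{\overbar{G}}(K_3)$ and $\eta_{\overbar{G}}(2K_2)$ are polynomials in a few pendant-count parameters, and the inequality can be checked directly. The main obstacle is exactly this discriminant verification: one must rule out every way the concentrated-edge structure could make $\eta_{\overbar{G}}(K_3)+\eta_{\overbar{G}}(2K_2)$ large relative to the edge count. This is where the absence of $K_4$ and $K_3\cupdot K_2$ does the real work, since without such a structural hypothesis the bare inequality $\eta_{\overbar{G}}(K_2)^{2}\ge 4(\eta_{\overbar{G}}(K_3)+\eta_{\overbar{G}}(2K_2))$ is false (for instance it already fails when $\overbar{G}=K_6$, a graph excluded here because its clique cover number is far below $n-2$).
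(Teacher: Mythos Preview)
The paper does not prove this theorem at all: it is stated as a summary of results from the cited references, and the only justification offered is the surrounding discussion in Section~2 (the matching-polynomial identity for (iv), Wagner's partition-polynomial theorem for (iii) and (v), the computer check of \cite{royle} for (i), and a bare citation of Brenti for (ii)). Your treatment of (i), (iii), (iv), (v) reproduces exactly that discussion, so on those items you and the paper agree.

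For (ii) you go well beyond what the paper does, since the paper simply cites \cite{brenti}. Your reduction to the discriminant inequality $a_{n-1}^{2}\ge 4a_{n-2}$ and your use of the third-generation forbidden subgraphs $K_4$, $K_3\cupdot K_2$, $3K_2$ to constrain $\overbar{G}$ is the right idea and is essentially how Brenti's argument runs. One point to tighten: your structural summary ``a triangle carrying pendant edges on a three-element vertex cover'' understates the possibilities. A vertex $v\notin T$ may be adjacent to \emph{two} vertices of the triangle $T$ (only the full neighbourhood is ruled out by $K_4$-freeness), so not every outside vertex is a leaf. What $K_3\cupdot K_2$-freeness does give you, once such a degree-$2$ vertex exists (say adjacent to $u_1,u_2$), is that the third triangle vertex $u_3$ has no neighbours outside $T$; so at most one of the three ``double-adjacency'' classes is nonempty. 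With that extra case in hand the discriminant check is still a short computation in two or three nonnegative integer parameters, and your plan goes through, but as written the ``pendant edges'' description would miss it.
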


For an edge $e=uv$ of a graph $G$, the graph $G-e$ denotes the subgraph of $G$ obtained by deleting the edge $e$, and  $G-\{u,v\}$ denotes the subgraph induced by the vertex set $V(G)-\{u,v\}$. The following result gives a recursive formula to calculate the $\sigma$-polynomial of the complement of a graph, but it can be applied only to edges in the original graph which satisfy a particular condition.

\begin{lemma}\cite{recursion} \label{recursion}Let $G$ be a graph and $e=uv$ be an edge of $G$ such that $e$ is not contained in any triangle of $G$. Then,
$$\sigma(\overline{G},x)=\sigma(\overline{G-e},x)+x\sigma(\overline{G-\{u,v\}},x)$$
\end{lemma}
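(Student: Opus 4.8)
The plan is to establish the identity by comparing coefficients of $x^i$ on both sides, exploiting the combinatorial meaning of $\sigma(\overline H,x)$: since the independent sets of $\overline H$ are exactly the cliques of $H$, the coefficient of $x^i$ in $\sigma(\overline H,x)$ counts the partitions of $V(H)$ into $i$ nonempty cliques of $H$. Write $\sigma(\overline G,x)=\sum_i b_i x^i$, $\sigma(\overline{G-e},x)=\sum_i c_i x^i$, and $\sigma(\overline{G-\{u,v\}},x)=\sum_i d_i x^i$, so that $x\,\sigma(\overline{G-\{u,v\}},x)=\sum_i d_{i-1}x^i$; it then suffices to prove $b_i=c_i+d_{i-1}$ for every $i$.

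To prove this, I would split the clique-partitions of $V(G)$ into $i$ blocks into two classes, according to whether the endpoints $u$ and $v$ lie in the same block. In the first class $u$ and $v$ lie in distinct blocks, so no block contains the edge $e$ and every block is a clique of $G-e$; conversely, any partition of $V(G)$ into $i$ cliques of $G-e$ is of this form, because a clique of $G-e$ cannot contain both $u$ and $v$. Since the cliques of $G-e$ are precisely the cliques of $G$ not containing $\{u,v\}$, this gives a bijection between the first class and the partitions counted by $c_i$. In the second class $u$ and $v$ share a common block $B$, which is then a clique of $G$ containing the edge $uv$; this is the one point where the hypothesis is used: since $e$ lies in no triangle of $G$, no vertex is adjacent to both $u$ and $v$, hence $B$ admits no third vertex, forcing $B=\{u,v\}$. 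Deleting $B$ leaves a partition of $V(G)\setminus\{u,v\}$ into $i-1$ cliques of $G$, each avoiding $u$ and $v$ and therefore a clique of $G-\{u,v\}$; conversely, adjoining the block $\{u,v\}$ (a clique of $G$ because $uv\in E(G)$) to any such partition returns a member of the second class. This is a bijection with the partitions counted by $d_{i-1}$.

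Putting the two bijections together yields $b_i=c_i+d_{i-1}$ for all $i$, and multiplying by $x^i$ and summing gives the stated recursion. I expect the only genuinely delicate step to be the structural observation that, under the triangle-freeness of $e$, a clique of $G$ containing both $u$ and $v$ must equal $\{u,v\}$; the remaining arguments are routine verifications that cliques behave as expected under deletion of the edge $e$ or of the two vertices $u,v$, together with the bookkeeping that a shift of the index by one corresponds to multiplication by $x$.
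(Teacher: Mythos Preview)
Your argument is correct. The bijection you set up is exactly right: partitions of $V(G)$ into $i$ cliques of $G$ split according to whether $u$ and $v$ share a block, the triangle-freeness of $e$ forces any shared block to be $\{u,v\}$, and the two resulting classes correspond to the partitions counted by $c_i$ and $d_{i-1}$ respectively.

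There is nothing to compare against, however: the paper does not prove Lemma~\ref{recursion} but merely quotes it from \cite{recursion} (Ma and Ren). Your coefficient-by-coefficient bijective argument is the natural proof and is essentially the one given in that reference, so you have independently reconstructed it.
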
 

\section{Main Results}\label{mainresults}
In this section, we will prove that if $G$ is a graph of order $n$ with $\chi(G) = n-3$, then $\sigma(G,x)$ has only real roots. We will use a characterization of the complements of such graphs, obtained in \cite{li}; specifically, $\chi(G)=n-3$ if and only if $G\cong \overbar{H} \vee K_{n-r}$ where $|V(H)|=r\leq n$, and either $H$ is a proper $3$-star graph (whose definition will follow shortly), or $H$ is one of the graphs of the families described in Figures~\ref{Ffamilyfigure}, \ref{Sfamilyfigure} and \ref{Lfamilyfigure}.

However, first we need a theorem that determines whether a real polynomial (that is, a polynomial with real coefficients) has all real roots. The \textit{Sturm sequence} of a real polynomial $f(t)$ of positive degree is a sequence of polynomials $f_0, f_1, f_2\dots$, where $f_0=f$, $f_1=f'$, and, for $i\geq 2$, $f_i=-\mbox{rem}(f_{i-1},f_{i-2})$, where $\mbox{rem}(h,g)$ is the remainder upon dividing $h$ by $g$. The sequence is terminated at the last nonzero $f_i$. The Sturm sequence of $f$ has \textit{gaps in degree} if there exist integers $j\leq k$ such that $\mbox{deg~}f_j<\mbox{deg~}f_{j-1}-1.$ Sturm's well known theorem (see, for example, \cite{brownrealpart}) is the following:

\begin{theorem}[Sturm's Theorem]\label{allrootsreal} Let $f(t)$ be a real polynomial whose degree and leading coefficient are positive. Then $f(t)$ has all real roots if and only if its Sturm sequence has no gaps in degree and no negative leading coefficients.
\end{theorem}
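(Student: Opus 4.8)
The plan is to derive this from the classical form of Sturm's theorem. Write the Sturm sequence of $f$ as $f_0=f, f_1=f',\dots,f_m$, where $f_m$ is the last nonzero term, and set $d_i=\deg f_i$ and let $\ell_i$ be the leading coefficient of $f_i$; note that $d_0=n$, $d_1=n-1$, and $\ell_1=n\ell_0>0$. For $\xi\in\mathbb{R}\cup\{\pm\infty\}$ let $V(\xi)$ be the number of sign changes in the list $f_0(\xi),\dots,f_m(\xi)$ after deleting zeros; thus $V(+\infty)$ counts sign changes in $\ell_0,\dots,\ell_m$ and $V(-\infty)$ counts them in $(-1)^{d_0}\ell_0,\dots,(-1)^{d_m}\ell_m$. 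Two classical facts are the input: (a) $V(-\infty)-V(+\infty)$ equals the number of distinct real roots of $f$ (the Sturm count, applied with endpoints $\pm\infty$); and (b) $f_m$ is, up to a nonzero constant factor, $\gcd(f,f')$, so $d_m=\deg\gcd(f,f')$ and $n-d_m$ is the number of distinct roots of $f$ in $\mathbb{C}$. Since $f$ has all roots real if and only if all $n-d_m$ of its distinct complex roots are real, the theorem is equivalent to the assertion: $V(-\infty)-V(+\infty)=n-d_m$ if and only if the Sturm sequence has no gaps in degree and no negative leading coefficients.

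Next I would record two inequalities. There are exactly $m$ consecutive pairs in $f_0,\dots,f_m$, so $V(-\infty)\le m$ and $V(+\infty)\ge 0$ give $V(-\infty)-V(+\infty)\le m$; and $n=d_0>d_1>\dots>d_m\ge 0$ forces $d_m\le n-m$, i.e.\ $m\le n-d_m$. Chaining these, $V(-\infty)-V(+\infty)\le m\le n-d_m$, so the equality $V(-\infty)-V(+\infty)=n-d_m$ holds precisely when both of these are equalities.

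It then remains to see when each inequality is tight. Equality $m=n-d_m$ holds exactly when $d_i=d_{i-1}-1$ for all $i$, i.e.\ exactly when there are no gaps in degree. For the first inequality, $V(-\infty)-V(+\infty)=m$ forces $V(+\infty)=0$, which means all the $\ell_i$ have the same sign, and since $\ell_0>0$ this is the same as saying no $\ell_i$ is negative. Conversely, if no $\ell_i$ is negative then $V(+\infty)=0$, and if moreover there are no gaps then $d_{i-1}$ and $d_i$ have opposite parity for each $i$, so the signs $(-1)^{d_i}\ell_i$ strictly alternate and $V(-\infty)=m$; hence both inequalities are tight. Putting the pieces together: $f$ has all real roots $\iff V(-\infty)-V(+\infty)=n-d_m\iff$ [no gaps in degree and no negative leading coefficients].

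The only genuinely nontrivial ingredients are the two classical facts (a) and (b); everything after that is the elementary bookkeeping above, so that is where I expect the main content to lie in a self-contained write-up. The usual proof of (a) tracks how $V(\xi)$ changes as $\xi$ increases through $\mathbb{R}$: at a zero of an interior term $f_i$ one has $f_{i-1}f_{i+1}<0$ (from the Euclidean relation $f_{i-1}=q_if_i-f_{i+1}$ together with the fact that consecutive terms of the reduced chain have no common zero), so $V$ is locally constant there, while at each zero of $f_0$ the value of $V$ drops by exactly one; comparing the counts at $-\infty$ and $+\infty$ then yields the number of distinct real roots, and (b) is the standard description of the last remainder in the Euclidean algorithm applied to $f$ and $f'$.
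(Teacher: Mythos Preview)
Your argument is correct. Note, however, that the paper does not actually supply a proof of this theorem: it is stated as a known result and attributed to the literature (the reference to \cite{brownrealpart}), then used as a black box in the proof of Lemma~\ref{f family lemma}. So there is nothing in the paper to compare your proof against.

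That said, your derivation from the classical Sturm count is clean and complete. The key chain
\[
V(-\infty)-V(+\infty)\ \le\ m\ \le\ n-d_m
\]
together with the identifications ``$m=n-d_m$ $\Leftrightarrow$ no gaps in degree'' and ``$V(+\infty)=0$ $\Leftrightarrow$ no negative leading coefficients'' is exactly the right bookkeeping, and you correctly observe that in the converse direction the absence of gaps forces the parities of $d_{i-1},d_i$ to alternate, giving $V(-\infty)=m$. The two classical inputs you isolate --- the Sturm sign-variation count and the fact that the last nonzero term is (a scalar multiple of) $\gcd(f,f')$ --- are indeed where all the analytic content lives; everything else is the elementary counting you give. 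One tiny remark: in your sketch of~(a), the statement $f_{i-1}f_{i+1}<0$ at a zero of $f_i$ uses that consecutive terms of the chain cannot vanish simultaneously, which strictly speaking requires passing to the reduced chain $f_j/f_m$; you allude to this, and since dividing every term by the same nonzero polynomial does not change the sign-variation counts $V(\pm\infty)$, the argument goes through unchanged.
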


\begin{figure}[htp]
\begin{center}
\includegraphics[scale=0.75]{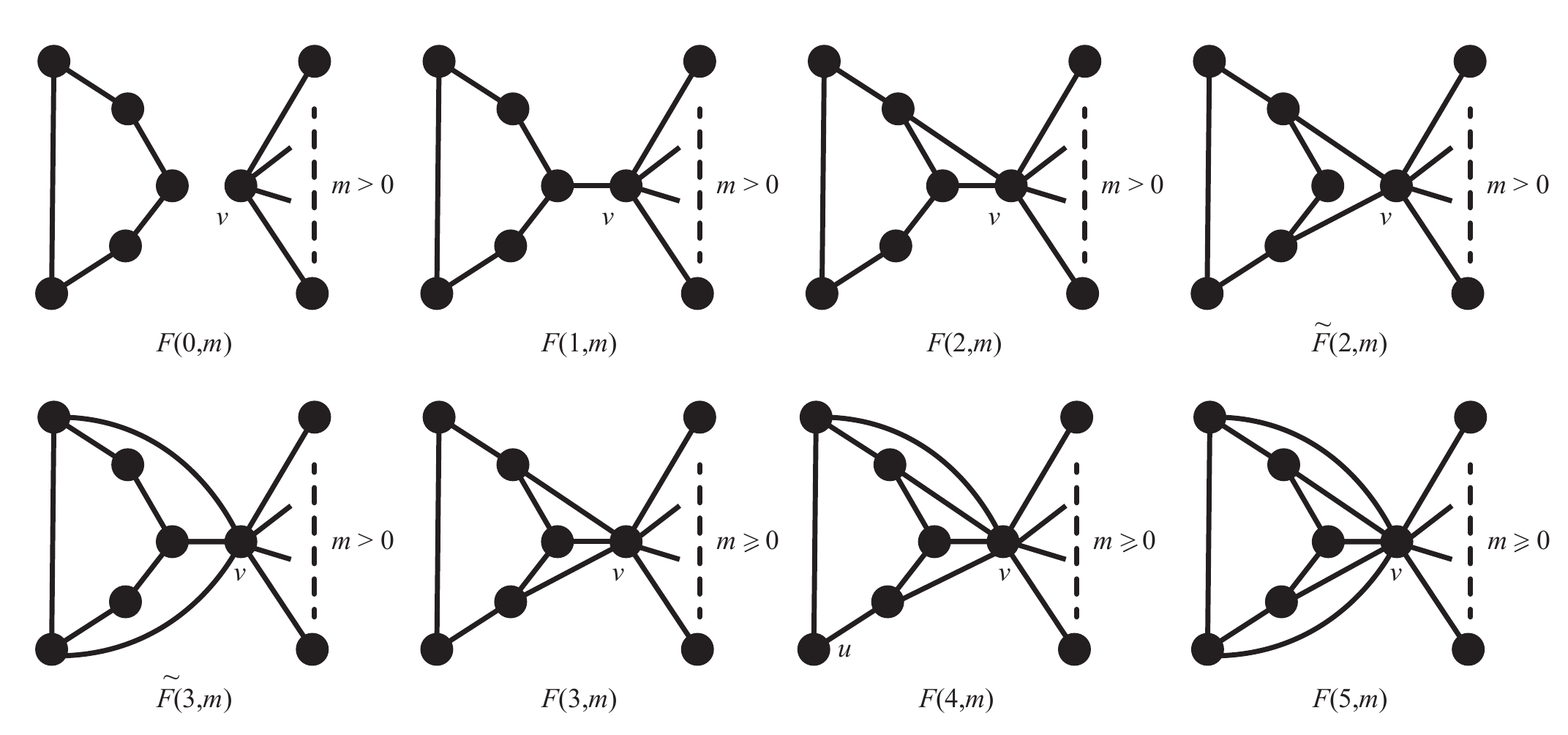}
\caption{The $F$ family}
\label{Ffamilyfigure}
\end{center}
\end{figure}

For the next result, we consider the family of graphs $F$ depicted in Figure~\ref{Ffamilyfigure}. In each of the eight subfamilies, the vertex $v$ is joined to each vertex in an independent set of size $m$.

\begin{lemma}\label{f family lemma}Let $G$ be a graph whose complement $\overline{G}$ is in the $F$ family (see Figure \ref{Ffamilyfigure}). Then $\sigma(G,x)$ has only real roots.
\end{lemma}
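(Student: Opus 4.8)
The plan is to compute $\sigma(G,x)$ explicitly for each of the eight subfamilies of the $F$ family and then verify that all roots are real, most likely by exhibiting a suitable factorization or by applying Sturm's Theorem (Theorem~\ref{allrootsreal}). Since $G \cong \overline{H} \vee K_{n-r}$ in the characterization of \cite{li}, part~\eqref{join} of Theorem~\ref{operations} gives $\sigma(G,x) = \sigma(\overline{H},x)\,\sigma(K_{n-r},x) = x^{n-r}\,\sigma(\overline{H},x)$, so it suffices to show $\sigma(\overline{H},x)$ has only real roots where $H$ ranges over the $F$ family itself. Thus the join with a clique is a red herring and I would immediately reduce to the eight base graphs in Figure~\ref{Ffamilyfigure}. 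Note that the parameter $m$ (the size of the independent set joined to $v$) is the one genuinely unbounded degree of freedom, so for each subfamily $\sigma(\overline{H},x)$ will be a polynomial in $x$ whose coefficients are polynomials in $m$.

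First I would set up the computation of $\sigma(\overline{H},x)$ using the edge-recursion of Lemma~\ref{recursion}: in each graph $H$ of the $F$ family, the edges incident to $v$ (joining $v$ to the independent set of size $m$) are not contained in any triangle, since the $m$ vertices form an independent set and, inspecting the figures, $v$'s other neighbours (if any) are arranged so that no such edge lies in a triangle. Applying Lemma~\ref{recursion} to one such edge $e=vw$ gives $\sigma(\overline{H},x) = \sigma(\overline{H-e},x) + x\,\sigma(\overline{H-\{v,w\}},x)$; iterating this over all $m$ edges at $v$ peels off the independent set one vertex at a time and expresses $\sigma(\overline{H},x)$ as a combination of $\sigma$-polynomials of the small fixed ``core'' graphs obtained by deleting $v$ (and some of the $m$ vertices). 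Concretely I expect something of the form $\sigma(\overline{H},x) = \sigma(\overline{H'},x) + m\,x\,\sigma(\overline{H''},x) + \cdots$ where $H', H''$ are bounded-size graphs, giving a closed formula linear or low-degree in $m$. Alternatively, since the core pieces are small, one can just read off the coefficients $a_{n-j}$ directly from the forbidden-subgraph counts $\eta_{\overline{H}}(\cdot)$ listed in the excerpt — the $m$-dependence enters through how many $K_2$'s, $2K_2$'s, etc.\ involve the independent set. Either route yields, for each of the eight cases, an explicit polynomial $\sigma(\overline{H},x)$.

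Once these explicit polynomials are in hand, I would prove each has only real roots by the most economical means available case by case: (a) if $\sigma(\overline{H},x)$ factors (e.g.\ pulls out powers of $x$, or a linear or quadratic factor with non-negative discriminant) one checks the remaining factor has real roots, possibly by recognizing it as a matching polynomial via Theorem~\ref{families}\eqref{matching theorem} when the relevant complement is triangle-free; (b) for the genuinely one-parameter families, one computes the Sturm sequence and checks, uniformly in $m$, that there are no gaps in degree and no negative leading coefficients — this amounts to verifying a handful of discriminant-type inequalities that are polynomial in $m$ and hold for all $m \geq 0$ (or all $m$ in the relevant range). Since the degrees here are small (the core graphs have few vertices, so $\sigma(\overline{H},x)$ has degree equal to the order of $H$, but the nontrivial part after factoring out $x^{\text{(something)}}$ has low degree), the Sturm conditions reduce to checking that one or two quadratics in $m$ are non-negative.

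The main obstacle I anticipate is bookkeeping rather than conceptual: correctly enumerating the forbidden subgraphs (or correctly iterating Lemma~\ref{recursion}) in each of the eight subfamilies without error, especially getting the $m$-dependence of each coefficient right, and then ensuring the resulting real-rootedness inequalities hold uniformly over all admissible $m$ rather than just small values. A secondary subtlety is confirming the hypothesis of Lemma~\ref{recursion} — that the chosen edge lies in no triangle of $H$ — which must be checked against each figure; if some subfamily has $v$ in triangles, I would instead fall back on the direct coefficient formulas for $a_{n-j}$ together with Sturm's Theorem. I would organize the write-up as eight short cases, each displaying $\sigma(\overline{H},x)$ and either its factorization or its Sturm sequence, and conclude real-rootedness in each case.
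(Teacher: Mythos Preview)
Your plan is correct and largely parallels the paper's proof, but the paper economizes more aggressively on the casework. Like you, the paper reduces to the eight base graphs and for $F(2,m)$ and $F(3,m)$ computes $\sigma(\overline{H},x)/x^{m+3}$ explicitly as a cubic in $x$ with coefficients linear in $m$, then checks Sturm's Theorem (Theorem~\ref{allrootsreal}) by verifying that the four leading coefficients of the Sturm sequence are positive for all $m\geq 0$. Where the paper diverges from your uniform computational strategy is in the other six cases: for $F(0,m)$, $F(1,m)$, $\tilde F(2,m)$, $\tilde F(3,m)$ it simply observes that $\overline{G}$ is triangle-free and invokes Theorem~\ref{families}(\ref{matching theorem}); and for $F(4,m)$ and $F(5,m)$ it avoids any explicit computation by recognizing that $G$ decomposes as $(C_5\vee K_m)\cup H$ with complete intersection (respectively $(C_5\vee K_m)\cupdot K_1$), so Theorem~\ref{operations}(\ref{cutset}) (respectively~(\ref{disjointunion})) together with Theorem~\ref{families}(\ref{order7}) finishes those cases without ever writing down $\sigma$. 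Your route via Lemma~\ref{recursion} or direct coefficient counts would also succeed in these cases---after factoring out $x^{m+3}$ you would be left with a cubic and could run Sturm---but the paper's structural shortcuts replace those two Sturm computations with one-line appeals to earlier results. In short: same engine for the two ``hard'' subfamilies, but the paper dispatches the remaining six by structure rather than calculation.
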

\begin{proof}
It is clear that if $\overline{G}$ is equal to $F(0,m)$, $F(1,m)$, $\tilde{F}(2,m)$ or $\tilde{F}(3,m)$ then it is triangle-free, hence $\sigma(G,x)$ has only real roots by Theorem \ref{families} (\ref{matching theorem}).\

If $\overline{G}=F(2,m)$ then we find that $\sigma(G,x)/x^{m+3}=x^3+(m+7)x^2+(5m+12)x+(5m+4)$. Calculations show that the leading coefficients of this polynomial's Sturm sequence are 
\[ 1,3,\frac{2}{9}(m^2-m+13) \mbox{ and } {\frac {9(5{m}^{4}-16{m}^{3}+88{m}^{2}-92m+272)}{4 \left( {m}^
{2}-m+13 \right) ^{2}}},\]
all of which are strictly positive. Hence, we get the result by Theorem \ref{allrootsreal}.
Also, if $\overline{G}=F(3,m)$ then we find that $\sigma(G,x)/x^{m+3}=x^3+(m+8)x^2+(5m+16)x+(5m+7)$.  The leading coefficients of this polynomial's Sturm sequence turn out to be 
\[ 1,3,\frac{2}{9}(m^2+m+16) \mbox{ and } {\frac {9(5{m}^{4}+2{m}^{3}+99{m}^{2}+46m+469)}{4 \left( {m}^
{2}+m+16 \right) ^{2}}},\]
all of which are obviously strictly positive for $m \geq 0$, and we conclude as above.

Now let $\overline{G}=F(4,m)$ and $v$ be the vertex of $\overline{G}$ which is adjacent to $m$ leaves in $\overline{G}$ and $u$ be the vertex which is not adjacent to $v$ in $\overline{G}$. Let $H$ be the edge induced by $u$ and $v$ in $G$. Now, $G=(C_{5}\vee K_m)\cup H$, and the intersection of $C_{5}\vee K_m$ and $H$ is equal to $\{u\}$ in $G$.   Note that $\sigma(C_{5},x)$ has only real roots by Theorem \ref{families} (\ref{order7}). Also, $\sigma(C_{5}\vee K_m,x)=x^m\sigma(C_{5},x)$ holds by Theorem \ref{operations} (\ref{join}), so the polynomial $\sigma(C_{5}\vee K_m,x)$ has only real roots. Hence, the result follows from Theorem \ref{operations} (\ref{cutset}).

Lastly, suppose that $\overline{G}=F(5,m)$, then $G=(C_{5}\vee K_m)\cupdot K_1$. Now, we obtain the result from Theorem \ref{operations} (\ref{disjointunion}), since both $\sigma(C_{5}\vee K_m,x)$ and $\sigma(K_1,x)=x$ have only real roots.

\end{proof}

The proof of the realness of the roots of the $\sigma$-polynomials of the other classes of graphs will require a more subtle argument than just Sturm sequences, and we rely on an approach taken by Chudnovsky and Seymour \cite{compatible} in their proof for  the realness of the roots of independence polynomials of claw-free graphs. 
Following \cite{compatible}, we say that polynomials $f_1,\dots f_k$ in $\mathbb{R}[x]$ are  \textit{compatible} if for all $c_1,\dots ,c_k \geq 0$, all the roots of the linear combination $\sum_{i=1}^kc_if_i(x)$ are real, and the polynomials are called \textit{pairwise compatible} if for all $i,j$ in $\{1,\dots ,k\}$, the polynomials $f_i(x)$ and $f_j(x)$ are compatible. The following observation will be utilized later.

\begin{remark}\label{remarkcompatible}Suppose that $f(x),g(x)\in \mathbb{R}[x]$ are two polynomials with positive leading coefficients and  all roots real. Then, $f$ and $g$ are compatible if and only if for all $c>0$, the polynomial $cf(x)+g(x)$ has all real roots.
\end{remark}

We need a few more definitions.
Let $a_1\geq \dots \geq a_m$ and $b_1\geq \dots \geq b_n$ be two sequences of real numbers. We say that the first \textit{interleaves} the second if $n\leq m\leq n+1$ and $a_1\geq b_1\geq a_2\geq b_2\geq \cdots$.
If $f$ is a polynomial of degree $d$ with only real roots, let $r_1\geq \dots \geq r_d$ be the roots of $f$. Then the sequence $(r_1,\dots ,r_d)$ is called the \textit{root sequence} of $f$.
Let $f_1,\dots ,f_k$ be polynomials with positive leading coefficients and all roots real. A \textit{common interleaver} for $f_1,\dots ,f_k$ is a sequence that interleaves the root sequence of each $f_i$.

The key analytic result we need from \cite{compatible} is the following:

\begin{theorem}\cite{compatible}\label{compatible}
Let $f_1,\dots f_k$ be polynomials with positive leading coefficients and all roots real. Then the following statements are equivalent:
\begin{enumerate}[(i)]
\item $f_1,\dots ,f_k$ are pairwise compatible,
\item for all $s,t$ such that $1\leq s<t\leq k$, the polynomials $f_s$ and $f_t$ have a common interleaver,
\item  $f_1,\dots ,f_k$  have a common interleaver,
\item $f_1,\dots ,f_k$  are compatible.
\end{enumerate}
\end{theorem}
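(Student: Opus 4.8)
The plan is to prove the cycle of implications (iv) $\Rightarrow$ (i) $\Rightarrow$ (ii) $\Rightarrow$ (iii) $\Rightarrow$ (iv); since this visits all four statements it establishes their equivalence. The implication (iv) $\Rightarrow$ (i) is immediate, since for any pair $f_s,f_t$ and any $c_s,c_t\ge 0$ the combination $c_sf_s+c_tf_t$ is a nonnegative combination of $f_1,\dots,f_k$ (take all other coefficients zero) and hence has only real roots. The substance lies in the remaining three implications, and I would arrange matters so that the single genuinely hard ingredient is a statement about \emph{two} polynomials.

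For (iii) $\Rightarrow$ (iv), let $s=(s_1\ge\cdots\ge s_m)$ be a common interleaver and let $g=\sum_i c_if_i$ with $c_i\ge 0$ not all zero, so that $\deg g=D:=\max\{\deg f_i:c_i\ne 0\}$ and $g$ has positive leading coefficient. Since an interleaver of a length-$\ell$ sequence has length $\ell$ or $\ell+1$, every $\deg f_i$ equals $m-1$ or $m$, and inspecting the factored form of each $f_i$ shows (using positivity of leading coefficients) that $\operatorname{sign} f_i(s_j)=(-1)^{j-1}$ for every $i$ and every $j=1,\dots,m$. As the $c_i$ are nonnegative the same holds for $g$, so $g$ has a root in each of the $m-1$ open intervals $(s_{j+1},s_j)$; when $m=D$ one further sign change between $-\infty$ and $s_D$ produces the last root. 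These roots are distinct, they account for $\deg g=D$, and they interleave $s$, so $g$ has only real roots (and in fact $s$ remains a common interleaver of the enlarged family, though only the first conclusion is needed). Coincidences among the $s_j$ make some intervals degenerate; I would dispose of this by first perturbing the $f_i$ to polynomials with simple, well-separated roots admitting a strictly decreasing common interleaver, proving the claim there, and letting the perturbation tend to zero, since real-rootedness and weak interleaving persist in the limit.

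For (ii) $\Rightarrow$ (iii): if every pair of the $f_i$ has a common interleaver, comparison of lengths forces all the degrees to lie within $1$ of one another; put $d=\max_i\deg f_i$ and seek a common interleaver of length $d$. Writing $a^{(i)}_1\ge\cdots$ for the roots of $f_i$, with the conventions $a^{(i)}_0=+\infty$ and $a^{(i)}_d=-\infty$ when $\deg f_i=d-1$, a length-$d$ sequence $s$ interleaves the root sequence of $f_i$ exactly when $s_j\in[a^{(i)}_j,a^{(i)}_{j-1}]$ for all $j$; hence a common interleaver exists iff $\max_i a^{(i)}_j\le\min_i a^{(i)}_{j-1}$ for every $j$, and this follows from the pairwise hypothesis by a one-line max--min argument (for fixed $j$ choose indices attaining the two sides and apply the pairwise condition to that pair). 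One also checks that for same-degree pairs a length-$(d{+}1)$ interleaver exists exactly when a length-$d$ one does, so restricting the length costs nothing.

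The crux is (i) $\Rightarrow$ (ii): if $f,g$ have positive leading coefficients, only real roots, and are compatible, then they have a common interleaver. I would argue by contraposition. First, a Newton-polygon (Puiseux) analysis of how the roots of $f+\mu g$ behave as $\mu\to\infty$ — the $|\deg f-\deg g|$ escaping roots grow like a root of $\operatorname{lead}(f)z^{\,|\deg f-\deg g|}+\operatorname{lead}(g)$, which is non-real once the gap is at least $2$ — shows that compatibility forces $|\deg f-\deg g|\le 1$. So if there is no common interleaver then the sorted root sequences fail the interleaving inequalities above, i.e.\ (after relabelling) $a_i>b_{i-1}$ for some $i\ge 2$. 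Fix $c$ with $b_{i-1}<c<a_i$, avoiding all roots of $fg$; then $f$ has at least two more roots exceeding $c$ than $g$ does. Now consider $h_\mu=f+\mu g$ for $\mu\in[0,\infty)$: by compatibility every $h_\mu$ has only real roots, the roots move continuously, and as $\mu\to\infty$ they tend to the roots of $g$ (one escaping to $-\infty$ if the degrees differ). Hence the number $N(\mu)$ of roots of $h_\mu$ exceeding $c$ changes by at least $2$ between $\mu=0$ and $\mu=\infty$; but $N$ can change only at the at-most-one value of $\mu$ with $h_\mu(c)=0$, and there it changes by at most the multiplicity of $c$ as a root of that $h_\mu$. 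Thus for every admissible $c$ some $h_\mu$ has a double root at $c$, i.e.\ $f(c)g'(c)-f'(c)g(c)=0$; since there are infinitely many admissible $c$, the polynomial $fg'-f'g$ vanishes identically, forcing $g$ to be a scalar multiple of $f$ --- but then $f$ and $g$ trivially share a common interleaver, a contradiction. I expect this implication to be the main obstacle: the earlier steps are essentially order-theoretic once the sign bookkeeping is in place, whereas here one must extract a genuine failure of real-rootedness (in effect, a forced coincidence of roots) out of a purely order-theoretic failure of interleaving, and the continuity-of-roots argument has to be run with care about root multiplicities and about the degree drop at $\mu=\infty$.
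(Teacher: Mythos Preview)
The paper does not prove this theorem at all: it is quoted verbatim from Chudnovsky--Seymour and used as a black box in the proof of Theorem~\ref{3 star theorem}, so there is no ``paper's own proof'' to compare against.

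For what it is worth, your outline is sound and tracks the architecture of the cited source (the cycle (iv)$\Rightarrow$(i)$\Rightarrow$(ii)$\Rightarrow$(iii)$\Rightarrow$(iv), with the two--polynomial implication (i)$\Rightarrow$(ii) carrying the weight). Two small corrections you should make if you write this up. First, in (i)$\Rightarrow$(ii) you say ``by contraposition'' but you are really arguing by contradiction: you explicitly invoke compatibility to guarantee that every $h_\mu=f+\mu g$ has only real roots, and that is what lets you track $N(\mu)$ along the real line and bound its jump by the multiplicity at $c$. Second, your parenthetical ``one escaping to $-\infty$ if the degrees differ'' describes only the case $\deg f>\deg g$; when $\deg g=\deg f+1$ no root escapes as $\mu\to\infty$, rather one enters from $-\infty$ as $\mu\to 0^+$. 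Either way $N(0^+)=N(0)$ and $N(\mu)\le j-2$ for large $\mu$, so the Wronskian conclusion $fg'-f'g\equiv 0$ and hence $g=\lambda f$ (with $\lambda>0$ by the sign of the leading coefficients) goes through.
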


\vspace{0.15in}
We now return to proving the realness of the roots of the $\sigma$-polynomials for the remaining classes of graphs with $\chi(G) = n-3$.
We say that a subset of vertices $S$ of a graph $G$ is a \textit{vertex cover} of $G$ if every edge of $G$ contains at least one vertex of $S$. The \textit{vertex cover number}, $\alpha_o(G)$, is the cardinality of a minimum vertex cover. Note that $S$ is a vertex cover of $G$ if and only if $V(G)-S$ induces an independent set, and that if $\alpha_o(\overbar{G})=k$ then $G$ contains a complete subgraph of order $n-k$, and hence $\chi(G)\geq n-k$. A graph $G$ is called a \textit{proper $k$-star} \cite{li} if $\alpha_0(G)=k$ and $G$ contains at least one $k^{th}$ generation forbidden subgraph. In the following proof, $n_G$ and $n_{H}$ denotes the number of vertices of the graph $G$  and subgraph $H$, respectively.

\begin{theorem}\label{3 star theorem}
Let $G$ be a graph such that $\alpha_o(\overline{G}) \leq 3$. Then $\sigma(G,x)$ has only real roots.
\end{theorem}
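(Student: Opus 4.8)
The plan is to combine the recursion of Lemma~\ref{recursion} with the compatible-polynomial machinery of Remark~\ref{remarkcompatible} and Theorem~\ref{compatible}, after reducing to a controlled structural situation. First, dispose of the easy reductions: if $\alpha_o(\overline{G})\le 2$ then $\chi(G)\ge n-2$ and Theorem~\ref{families}(\ref{brenti n-2}) applies, so we may assume $\alpha_o(\overline{G})=3$. Fix a minimum vertex cover $\{a,b,c\}$ of $\Gamma:=\overline{G}$, so that $I:=V(G)\setminus\{a,b,c\}$ is a clique of $G$; note that every clique of $\Gamma$ meets $I$ in at most one vertex, and that each isolated vertex of $\Gamma$ is a universal vertex of $G$ and so merely multiplies $\sigma(G,x)$ by $x$. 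Hence we may also assume $\Gamma$ has no isolated vertex, so each of $a,b,c$ has a neighbour in $I$.

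Two configurations then follow at once from Theorem~\ref{families}. If $\Gamma[\{a,b,c\}]$ is a triangle, then $\{a,b,c\}$ is an independent set of $G$ and $I$ is a clique, so $G$ is a split graph and hence chordal, and Theorem~\ref{families}(\ref{chordal}) applies. If $\Gamma[\{a,b,c\}]$ has at most two edges and every edge of $\Gamma$ lies in a triangle, a short case analysis — using the minimality of $\{a,b,c\}$ and the absence of isolated vertices — shows that $\Gamma[\{a,b,c\}]$ must in fact have exactly two edges and that $\Gamma=K_1\vee\Gamma'$ with $\Gamma'$ bipartite; since the join of two comparability graphs is again a comparability graph, $\overline{G}=\Gamma$ is a comparability graph and Theorem~\ref{families}(\ref{comparability}) applies.

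In all other cases $\Gamma$ has an edge $e=uv$ lying in no triangle, and since $\{a,b,c\}$ is a vertex cover we may take $u\in\{a,b,c\}$. Lemma~\ref{recursion} gives $\sigma(G,x)=\sigma(\overline{\Gamma-e},x)+x\,\sigma(\overline{\Gamma-\{u,v\}},x)$. The graph $\overline{\Gamma-e}=G+e$ again has complement of vertex-cover number at most $3$, so by induction on $|E(\Gamma)|$ its $\sigma$-polynomial has only real roots; and $\Gamma-\{u,v\}$ has a vertex cover of size at most $2$, so $\sigma(\overline{\Gamma-\{u,v\}},x)$ has only real roots by Theorem~\ref{families}(\ref{brenti n-2}). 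By Remark~\ref{remarkcompatible} and Theorem~\ref{compatible}, it then suffices to exhibit a common interleaver for $\sigma(\overline{\Gamma-e},x)$ and $x\,\sigma(\overline{\Gamma-\{u,v\}},x)$. The key computational input is that a graph $\Delta$ whose complement has a vertex cover $\{b,c\}$ of size at most $2$ satisfies $\sigma(\overline{\Delta},x)=x^{|V(\Delta)|-2}\bigl((x+M_b)(x+M_c)-M_{bc}\bigr)$ when $bc\notin E(\Delta)$, and $\sigma(\overline{\Delta},x)=x^{|V(\Delta)|-2}\bigl((x+M_b)(x+M_c)+x\bigr)$ when $bc\in E(\Delta)$, where $M_b,M_c,M_{bc}$ count the vertices outside $\{b,c\}$ that are adjacent in $\Delta$ to $b$, to $c$, and to both.

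Producing these common interleavers is the heart of the argument and, I expect, the main obstacle. As in the method of Chudnovsky and Seymour, I would not prove the statement as a bare real-rootedness claim but strengthen the induction hypothesis to assert pairwise compatibility of a suitable family of $\sigma$-polynomials of the graphs that arise along the recursion, so that the compatibility needed at each step is already available from the hypothesis. The base of this induction is the family of graphs with complement of vertex-cover number at most $2$, where the closed forms above turn every interleaving requirement into a comparison of the roots of the quadratic $x^2+(M_b+M_c)x+(M_bM_c-M_{bc})$ before and after decreasing $(M_b,M_c,M_{bc})$ by the pattern induced by deleting a single vertex outside $\{b,c\}$; such comparisons reduce to elementary estimates like $\sqrt{u^2+v}-\sqrt{(u-1)^2+v}\le 1$ for integers $u$ and reals $v\ge 0$. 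Equivalently, one can iterate the recursion, at each stage on some edge lying in no triangle, peeling off isolated vertices as they appear, to write $\sigma(G,x)$ directly as a nonnegative combination of $\sigma$-polynomials of graphs that are comparability graphs or have complement of vertex-cover number at most $2$, and then show this finite family is pairwise compatible. Either way, the genuine difficulty is organizational — pinning down the correct strengthened statement and checking it across all the ways $\{a,b,c\}$ can be joined to $I$; granted the common interleavers, Theorem~\ref{compatible} immediately yields that $\sigma(G,x)$ has only real roots.
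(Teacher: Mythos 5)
Your structural reductions are sound (the split-graph case, the ``every edge in a triangle'' analysis forcing a universal vertex over a bipartite graph, and the observation that otherwise some edge at a cover vertex lies in no triangle all check out), and your closed forms for $\sigma(\overline{\Delta},x)$ when $\overline{\Delta}$ has vertex cover number $\le 2$ are correct. But there is a genuine gap at exactly the point you flag as ``the heart of the argument'': the proof never establishes the compatibility it needs. Inducting on $|E(\Gamma)|$ with the hypothesis ``$\sigma$ has only real roots'' is not enough, because Lemma~\ref{recursion} writes $\sigma(G,x)$ as a \emph{sum} of two real-rooted polynomials, and such a sum need not be real-rooted. You correctly observe that a common interleaver for $\sigma(\overline{\Gamma-e},x)$ and $x\,\sigma(\overline{\Gamma-\{u,v\}},x)$ would close this, but you neither prove that one exists nor pin down the strengthened induction hypothesis (``pairwise compatibility of a suitable family'') that would supply it; as stated, that claim would have to hold for an arbitrary graph of vertex cover number $3$ and an arbitrary triangle-free edge, which is a substantial assertion requiring proof. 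Moreover, since the recursion is iterated, one ultimately needs pairwise compatibility of \emph{all} the terms appearing in the final nonnegative combination simultaneously (via Theorem~\ref{compatible}), not just of the two summands at a single step. A proof sketch that defers precisely this verification has omitted the theorem's actual content.

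For comparison, the paper avoids any general compatibility lemma by first shrinking the problem structurally: it cases on the graph induced by the cover ($\overline{K_3}$, $K_3$, $P_3$, $K_2\cupdot K_1$), dispatches the first three via triangle-freeness, chordality, and a clique-cutset decomposition (Theorem~\ref{operations}(\ref{cutset})), and in the last case uses clique cutsets and the comparability-graph criterion to reduce to a configuration in which the recursion on the edges at the isolated cover vertex produces only \emph{three} distinct polynomials, each an explicit quadratic in two parameters $\alpha,\beta$ times a power of $x$. The interleaving $0>t_1>r_2>t_2>r_3$ and the compatibility of the two ``deleted-vertex'' quadratics are then verified directly by the quadratic formula and a discriminant inequality. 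If you want to salvage your more uniform recursive approach, you would need to prove a concrete interlacing lemma for the specific quadratics $x^2+(M_b+M_c)x+M_bM_c-M_{bc}$ (and their perturbations under deleting a vertex or an edge) of the kind you allude to with $\sqrt{u^2+v}-\sqrt{(u-1)^2+v}\le 1$ --- that computation, carried out over all the deletion patterns that actually arise, is the missing proof.
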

\begin{proof}
We may assume that $\alpha_o(\overline{G})=3$ and  $\chi(G)=n_G-3$, since otherwise $\chi(G)\geq n_G-2$ and the result holds by Theorem \ref{families} (\ref{brenti n-2}). Also, we may assume that $\overline{G}$ has no isolated vertices by Theorem \ref{operations}(\ref{join}). Let $S=\{u_1,u_2,u_3\}$ be a vertex cover of $\overline{G}$, so that $\overline{G}-S$ is an independent set. We set $V = V(G) = V(\overline{G})$. There are four cases we need to consider: $S$ induces either (i) an independent set, (ii) $K_3$, (iii) $P_3$,  or (iv) $K_2\cupdot K_1$ in $\overline{G}$. \
 
For case (i), if $S$ induces an independent set in $\overline{G}$, then $\overline{G}$ is a triangle-free graph and we are done by Theorem \ref{families} (\ref{matching theorem}).
 
In case (ii), the subgraph of $\overline{G}$ induced by $S$ is isomorphic to $K_3$. Here  $G$ can be partitioned into a clique and independent set, so one can check that $G$ is, in fact, chordal and hence the result follows from Theorem \ref{families}(\ref{chordal}).\
 
Now, suppose that case (iii) holds, namely that $S$ induces in $\overline{G}$ a $P_3$. Without loss of generality, we may assume that $u_2$ is adjacent to both $u_1$ and $u_3$ in $\overline{G}$. Let $H_1$ (respectively $H_2$) be the subgraph induced by $V-\{u_1,u_3\}$ (respectively $V-\{u_2\}$) in $G$. Clearly, $H_1\cap H_2$ is a complete graph in $G$ and $H_1\cup H_2=G$. Also, $\sigma(H_1,x)$ and $\sigma(H_2,x)$ have only real roots by Theorem \ref{families}(\ref{brenti n-2}) because $\chi(H_1)\geq n_{H_1}-2$ and $\chi(H_2) \geq n_{H_2}-2$. Therefore, we obtain the result by Theorem \ref{operations}(\ref{cutset}).\
  
Lastly, suppose that the subgraph induced by $S$ in $\overline{G}$ is isomorphic to $K_2\cupdot K_1$ -- this is the final case (iv). Without loss, let $u_1$ and $u_2$ be adjacent to each other in $\overline{G}$; we will partition the remaining vertices into sets by their neighbourhood in $S$ (see Figure~\ref{pointcoverfigure}). Let $P$ be the set of vertices which are adjacent to all vertices of $S$ in $\overline{G}$. By Theorem \ref{operations}(\ref{cutset}), it suffices to prove the result when $P = \emptyset$. Let $M_i$ be the set of all leaves which are adjacent to $u_i$ in $\overline{G}$, and  $m_i=|M_i|$. Also, let $R$ be the set of all common neighbours of $u_1$ and $u_2$ in $\overline{G}$. Similarly, let $J$ (respectively, $K$) be the set of all common neighbours of $u_2$ and $u_3$ ($u_1$ and $u_3$, respectively) in $\overline{G}$. Let $r=|R|$, $j=|J|$ and $k=|K|$. If $j=0$ or $k=0$, then $\overline{G}$ is a comparability graph (see Figure \ref{comparabilityfigure} for $k=0$) and we obtain the result from Theorem \ref{families}(\ref{comparability}) . Hence, we may assume that $j,k\geq 1$. Now, let $H$ be the subgraph of $\overline{G}$ induced by $V-(M_3\cup \{u_3\})$. Let also $H_J$ (respectively $H_K$) be a subgraph of $\overline{G}$ induced by $V-(M_3\cup \{u_3,v_J\})$ (respectively $V-(M_3\cup \{u_3,v_K\})$) where $v_J$ (respectively $v_K$) is a vertex of $J$ (respectively $K$). None of the edges incident to $u_3$ are contained in a triangle in $\overline{G}$. We now apply the recursive formula in Lemma~\ref{recursion} to all edges incident to $u_3$ successively. We set $G_{i}$ be an induced subgraph of $\overbar{G}$ which is obtained from $\overbar{G}$ by deleting $i$ vertices of $M_3$. Beginning with the edges between $M_{3}$ and $u_{3}$, we find from Lemma~\ref{recursion} (and the fact from Theorem~\ref{operations}(i) that any isolated vertex in the complement of a graph adds a factor of $x$ to the $\sigma$-polynomial) that 
\begin{eqnarray*}
\sigma(G,x) & = & \sigma(G_{0},x)\\
            & = & x\sigma(G_{1},x) + x \cdot x^{m_{3}-1}\sigma(\overline{H},x)\\
            & = & x\sigma(G_{1},x) + x^{m_{3}}\sigma(\overline{H},x)\\
            & = & x\left( x\sigma(G_{2},x) + x^{m_{3}-1}\sigma(\overline{H},x) \right) + x^{m_{3}}\sigma(\overline{H},x)\\ 
            & = & x^{2}\sigma(G_{2},x) + 2x^{m_{3}}\sigma(\overline{H},x)\\ 
            & = & \cdots\\
            & = & x^{m_{3}}\sigma(G_{m_3},x) + m_{3}x^{m_{3}}\sigma(\overline{H},x).
\end{eqnarray*}
We then continue to successively remove the other edges incident to $u_{3}$ in $G_{m_3}$, and using a similar argument, we find that $\sigma(G_{m_3},x) = jx\sigma(\overbar{H_{J}},x)+kx\sigma(\overbar{H_{K}},x)+x\sigma(\overbar{H},x)$, so that
$$\sigma(G,x)=x^{m_3}\left(x\sigma(\overbar{H},x)+jx\sigma(\overbar{H_{J}},x)+kx\sigma(\overbar{H_{K}},x)+m_3\sigma(\overbar{H},x)\right).$$
  
The chromatic number of each of the graphs $\overbar{H}, \overbar{H_J}$ and  $\overbar{H_K}$ is at least the order of the graph minus $2$, as none of these graphs contain a third generation forbidden subgraph. Hence, their $\sigma$-polynomials have only real roots by Theorem \ref{families}(\ref{brenti n-2}).
\begin{figure}[htp]
\begin{center}
\includegraphics[scale=0.75]{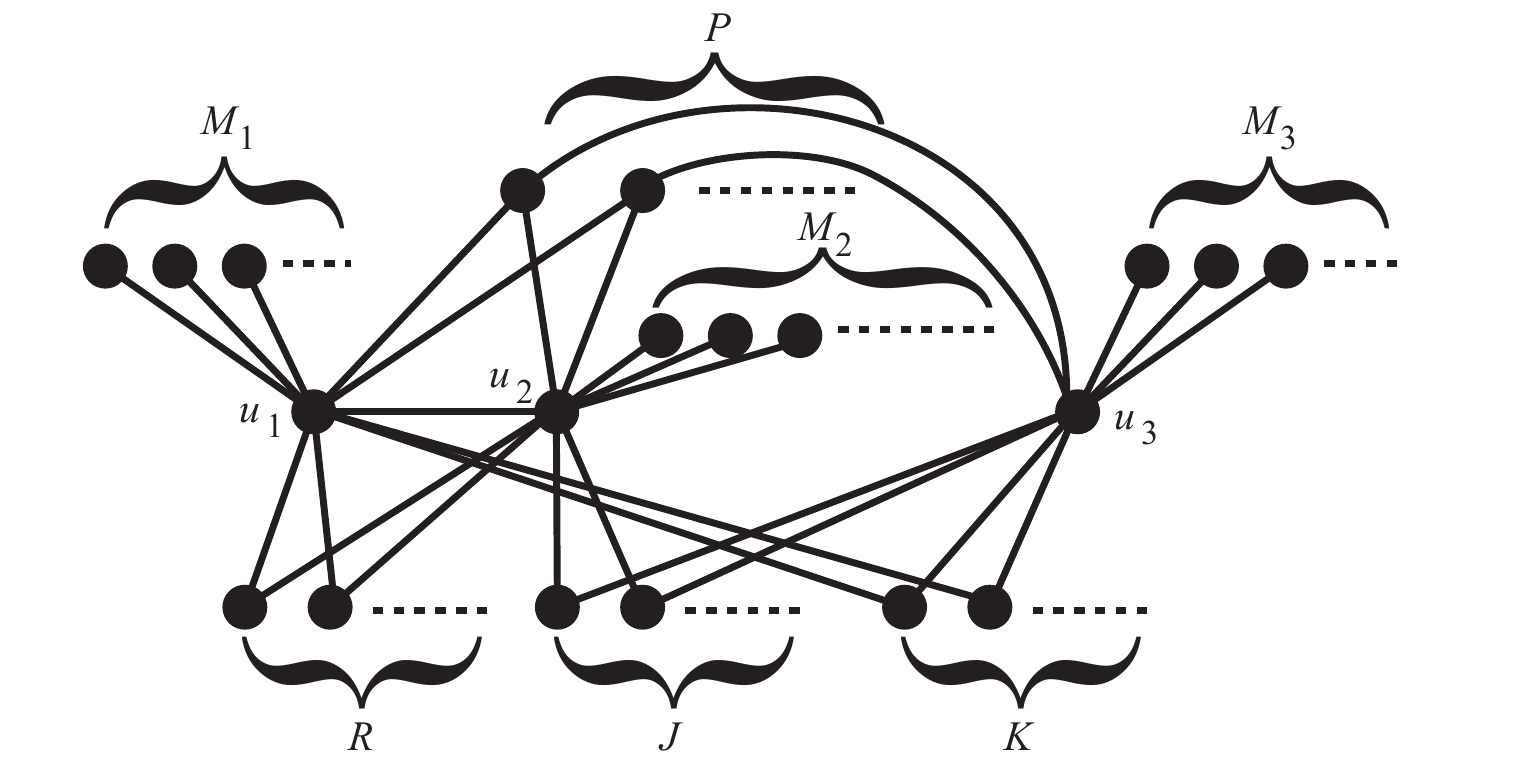}
\caption{The graph $\overbar{G}$ with vertex cover $\{u_1,u_2,u_3\}$}
\label{pointcoverfigure}
\end{center}
\end{figure}

\begin{figure}[htp]
\begin{center}
\includegraphics[scale=0.75]{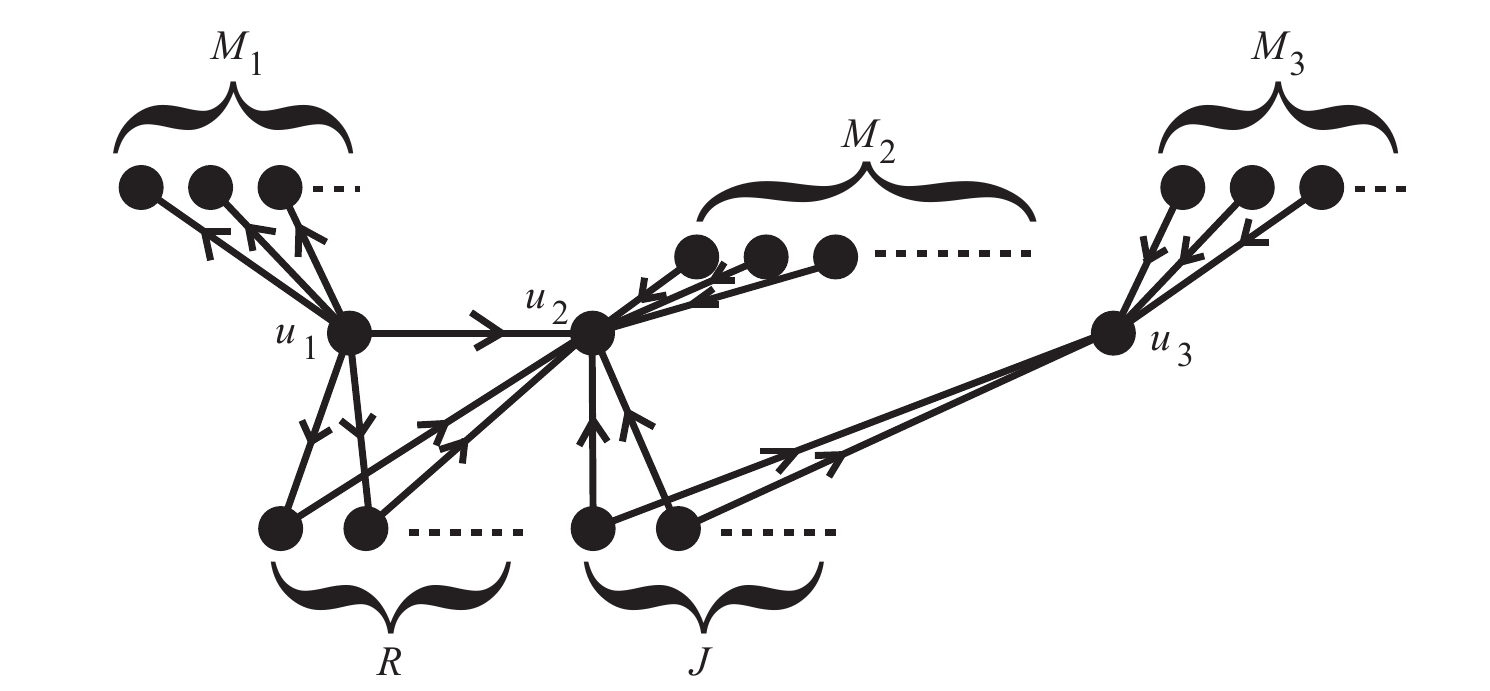}
\caption{A comparability graph of a subclass of graphs from Figure~\ref{pointcoverfigure}.}
\label{comparabilityfigure}
\end{center}
\end{figure}
  
Now, by Theorem \ref{compatible}, it suffices to show that the polynomials
$$\sigma(\overline{H},x), \  x\sigma(\overline{H},x), \ x\sigma(\overline{H_{J}},x), \text{ and }  x\sigma(\overline{H_{K}},x) $$
are  pairwise compatible. Let  $\alpha=m_2+j+r$ and $\beta=m_1+k+r$. Now the number of $K_{2}$'s, $K_{3}$'s and $2K_{2}$'s in $\overline{H}$ are, respectively, $\alpha + \beta + 1$, $r$ and $\alpha \beta - r$, and hence
\begin{eqnarray*}
\sigma(\overline{H},x) & = & x^{n_{H}-2}\left(x^2+(\alpha+\beta+1)x+\alpha \beta   \right).
\end{eqnarray*}
Moreover, as $\overline{H_{J}}$ and $\overline{H_{K}}$ are graphs of the same form as $H$ with $j$ replaced by $j-1$ and $k$ replaced by $k-1$ respectively (and hence $\alpha$ and $\beta$ decreased by $1$, respectively), we see that
\begin{eqnarray*}
x\sigma(\overline{H_{J}},x) & = & x^{n_{H}-2}\left(x^2+(\alpha+\beta)x+(\alpha-1) \beta   \right), \mbox{ and }\\
x\sigma(\overline{H_{K}},x) & = & x^{n_{H}-2}\left(x^2+(\alpha+\beta)x+\alpha (\beta-1)   \right).
\end{eqnarray*}
 
Let $0=r_1\geq r_2\geq r_3$ be the roots of $x^3+(\alpha+\beta+1)x^2+\alpha \beta x $ and $t_1\geq t_2$ be the roots of $x^2+(\alpha+\beta)x+(\alpha-1) \beta $, so
\begin{eqnarray*}
r_2 & = & \frac{-(\alpha+\beta+1)+\sqrt{(\alpha+\beta+1)^2-4\alpha \beta}}{2},\\
r_3 & = & \frac{-(\alpha+\beta+1)-\sqrt{(\alpha+\beta+1)^2-4\alpha \beta}}{2},\\
t_1 & = & \frac{-(\alpha+\beta)+\sqrt{(\alpha+\beta)^2-4(\alpha-1) \beta}}{2}, \mbox{ and}\\
t_2 & = & \frac{-(\alpha+\beta)-\sqrt{(\alpha+\beta)^2-4(\alpha-1) \beta}}{2}.
\end{eqnarray*}

It is not difficult to verify that $0=r_1>t_1>r_2>t_2>r_3$, which shows that $\sigma(\overline{H},x), \  x\sigma(\overline{H},x),$ and $ \ x\sigma(\overline{H_{J}},x)$ have a common interleaver. Since $j$ and $k$ play symmetric roles, it is also clear that the same argument works to prove that $\sigma(\overline{H},x), \  x\sigma(\overline{H},x),$ and $ \ x\sigma(\overline{H_{K}},x)$ also have a common interleaver.
 
Finally, we need to show that $\sigma(\overline{H_{J}},x)$ and $\sigma(\overline{H_{K}},x)$ are compatible.  So, we shall prove that $x^2+(\alpha+\beta)x+(\alpha-1) \beta$ and $x^2+(\alpha+\beta)x+\alpha (\beta-1)$ are compatible. We use Remark \ref{remarkcompatible}, and show that $c(x^2+(\alpha+\beta)x+(\alpha-1) \beta) + x^2+(\alpha+\beta)x+\alpha (\beta-1)$ has all real roots for all $c > 0$. 

Let $c>0$. Then $(c+1)(\alpha-\beta)^2>-4(c\beta+\alpha)$ which is equivalent to $(c+1)(\alpha+\beta)^2>4(c+1)\alpha \beta-4c\beta-4\alpha$ or $(c+1)^2(\alpha+\beta)^2>4(c+1)(c(\alpha-1)\beta+\alpha(\beta-1))$. This implies that the discriminant of the quadratic $(c+1)x^2+(c+1)(\alpha+\beta)x+c(\alpha-1)\beta+\alpha(\beta-1)$ is nonnegative, and hence $x^2+(\alpha+\beta)x+(\alpha-1) \beta$ and $x^2+(\alpha+\beta)x+\alpha (\beta-1)$ are compatible. This completes the proof.
\end{proof}

\begin{figure}[htp]
\begin{center}
\includegraphics[scale=0.75]{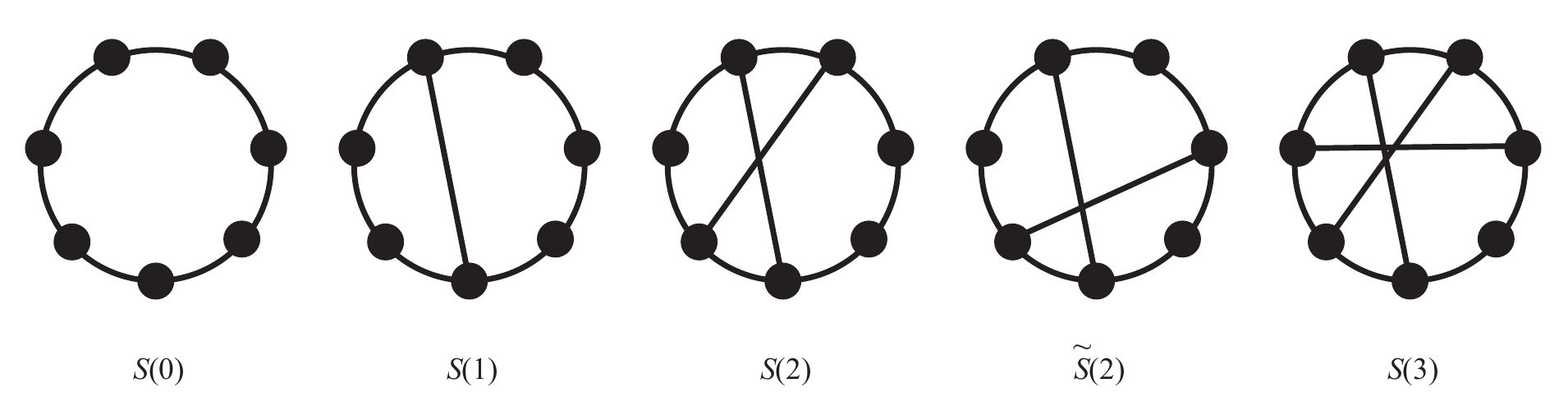}
\caption{The $S$ family}
\label{Sfamilyfigure}
\end{center}
\end{figure}

\begin{figure}[htp]
\begin{center}
\includegraphics[scale=0.75]{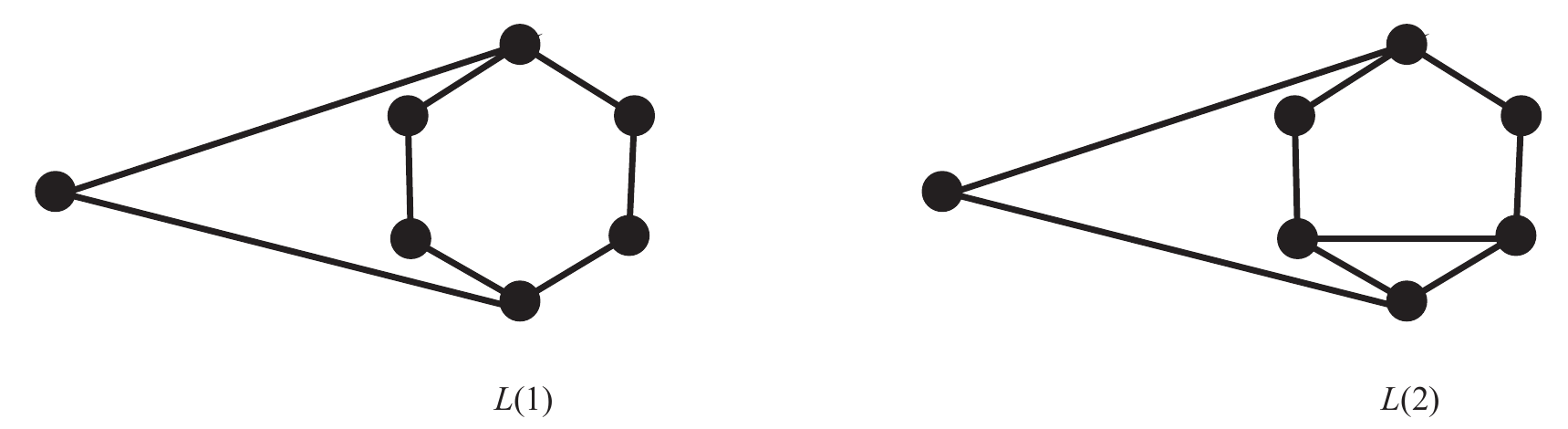}
\caption{$L$ family}
\label{Lfamilyfigure}
\end{center}
\end{figure}

We are ready to tie everything all together in a proof of Brenti's conjecture.

\begin{theorem}\label{main n-3} Let $G$ be a graph on $n$ vertices.
If $\chi(G)=n-3$, then $\sigma(G,x)$ has only real roots.
\end{theorem}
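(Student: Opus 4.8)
The plan is to appeal to the structural classification of graphs with $\chi(G)=n-3$ obtained in \cite{li}: such a graph $G$ is isomorphic to $\overline{H}\vee K_{n-r}$, where $H$ has $r\le n$ vertices and $H$ is either a proper $3$-star or a member of one of the families $F$, $S$, $L$ depicted in Figures~\ref{Ffamilyfigure}, \ref{Sfamilyfigure} and \ref{Lfamilyfigure}. Since $\sigma(K_{n-r},x)=x^{n-r}$, Theorem~\ref{operations}(\ref{join}) gives $\sigma(G,x)=x^{n-r}\,\sigma(\overline{H},x)$, so $\sigma(G,x)$ has only real roots precisely when $\sigma(\overline{H},x)$ does. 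Thus the entire problem reduces to showing, for each admissible $H$, that the $\sigma$-polynomial of $\overline{H}$ has only real roots; that is, that $\sigma$ of the complement of each such $H$ is real-rooted. We then dispatch the four types of $H$ in turn.

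If $H$ is a proper $3$-star, then $\alpha_o(H)=3$, and since $\overline{G}=\overline{\overline{H}\vee K_{n-r}}=H\cupdot(n-r)K_1$ we also have $\alpha_o(\overline{G})=3$; hence Theorem~\ref{3 star theorem} applies directly to $G$ and we are done. If $H$ lies in the $F$ family, then the complement of the graph $\overline{H}$ is exactly $H\in F$, so Lemma~\ref{f family lemma} (taking $\overline{H}$ for the ``$G$'' of that lemma) shows $\sigma(\overline{H},x)$, and hence $\sigma(G,x)$, has only real roots. The remaining work is the $S$ and $L$ families, which I would treat member by member.

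For each graph $\overline{H}$ with $H$ in the $S$ or $L$ family, I would reuse the toolkit already built up in Section~\ref{mainresults}: if the complement of $\overline{H}$ (that is, $H$) is triangle-free, invoke Theorem~\ref{families}(\ref{matching theorem}); if $\overline{H}$ is chordal or is the complement of a comparability graph, invoke Theorem~\ref{families}(\ref{chordal}) or (\ref{comparability}); if $\overline{H}$ has order at most $7$, invoke Theorem~\ref{families}(\ref{order7}); if after stripping off a dominating complete subgraph the remainder has few vertices, the $\sigma$-polynomial becomes a cubic or quartic whose coefficients are polynomials in the family parameter, and one checks real-rootedness by an explicit Sturm sequence exactly as was done for $F(2,m)$ and $F(3,m)$; and for the rest, one exhibits a clique cutset or a join with a complete graph and applies Theorem~\ref{operations}(\ref{cutset})--(\ref{disjointunion}), or, when the pieces overlap in a single vertex, writes $\sigma(\overline{H},x)$ via Lemma~\ref{recursion} as a nonnegative linear combination of $\sigma$-polynomials of subgraphs each of chromatic number at least its order minus $2$ (so each is real-rooted by Theorem~\ref{families}(\ref{brenti n-2})) and then verifies that these polynomials are pairwise compatible, appealing to Theorem~\ref{compatible} via common interleavers or discriminant inequalities, just as in the proof of Theorem~\ref{3 star theorem}.

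The main obstacle is precisely this analysis of the $S$ and $L$ families: being the members of the classification that are \emph{not} proper $3$-stars, they are not covered by any single uniform reduction, so each subfamily must be paired with the right tool, and the genuinely delicate ones force a recursive decomposition along edges lying in no triangle followed by a pairwise-compatibility argument. Identifying the correct decomposition and carrying the family parameters through the interleaving and discriminant computations for the resulting quadratic or cubic factors is where the real effort lies, even though each individual step is elementary.
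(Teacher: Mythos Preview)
Your overall architecture is exactly the paper's: invoke the classification from \cite{li}, strip off the $K_{n-r}$ factor via Theorem~\ref{operations}(\ref{join}), and then handle the four types separately, citing Theorem~\ref{3 star theorem} for proper $3$-stars and Lemma~\ref{f family lemma} for the $F$ family. That part is fine and matches the paper line for line.

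Where you diverge is the $S$ and $L$ families, and here you have badly misjudged the difficulty. You describe these as ``the main obstacle'' requiring a bespoke mix of Sturm sequences, clique cutsets, recursive decompositions and pairwise-compatibility arguments carried through family parameters. In fact the $S$ and $L$ families are \emph{finite} lists of small graphs (look again at Figures~\ref{Sfamilyfigure} and \ref{Lfamilyfigure}): every graph in either family has order at most $7$. Hence Theorem~\ref{families}(\ref{order7}) disposes of all of them in a single sentence, with no parameters and no case analysis. You even list ``order at most $7$'' among your tools, but fail to notice that it swallows the entire $S$ and $L$ lists. The genuine work in this theorem was already absorbed into Lemma~\ref{f family lemma} (the $F$ family, which \emph{does} carry an unbounded parameter $m$) and Theorem~\ref{3 star theorem} (the proper $3$-stars); once those are in hand, Theorem~\ref{main n-3} is a three-line wrap-up, not a new battlefield.
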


\begin{proof}
In \cite{li}, it was shown that for a graph $G$ with $n$ vertices, $\chi(G)=n-3$ if and only if $G$ is isomorphic to $H \vee K_{n-r}$ where $|V(H)|=r\leq n$ and $\overbar{H}$ is a proper $3$-star graph or $\overline{H}$ is one of the graphs of the $F$, $S$ and $L$ families. So, by Theorem \ref{operations}(\ref{join}), it suffices to show that $\sigma(H,x)$ has only real roots. As we already noted earlier, the $\sigma$-polynomials of all graphs of order at most $7$ have all real roots. Hence, the result is clear if $\overbar{H}$ is a graph in one of the $S$ or $L$ families (see Figures~\ref{Sfamilyfigure} and \ref{Lfamilyfigure}). Also, if $\overbar{H}$ is in the $F$ family, we get the desired result by Lemma \ref{f family lemma}. Finally, if $\overbar{H}$ is a proper $3$-star, then the result is established by Theorem \ref{3 star theorem}.
\end{proof}

\section{Concluding remarks}

As the $\sigma$-polynomials of graphs of order $n$ with chromatic number at least $n-3$ have all real roots, the question remains how far down can the chromatic number go before nonreal roots arise? For chromatic number $n-5$ there are indeed such graphs. Figure~\ref{smallorder} shows the two smallest examples (known as {\em Royle graphs} \cite[pg. 265]{readwilson}), of order $8$ (as mentioned earlier, any such graphs must have order at least $8$); it is interesting to observe that the first is a subgraph of the second. Moreover, by taking the join of such a graph with a complete graph, we see that there are graphs of all order $n \geq 8$ with chromatic number $n-5$ whose $\sigma$-polynomials have a nonreal root. So the question remains -- are there any graphs of order $n$ with chromatic number $n-4$ whose $\sigma$-polynomials have nonreal roots? In \cite{royle} all graphs of order $n \leq 9$ whose $\sigma$-polynomials have a nonreal root are listed, and none of these have chromatic number $n-4$. We have verified as well that all of the $\sigma$-polynomials of the $113,272$ $6$-chromatic graphs of order $10$ have all real roots, so that if there is a graph with chromatic number $n-4$ whose $\sigma$-polynomial has a nonreal root, then it has order at least $11$.

\begin{figure}[htp]
\begin{center}
\includegraphics[scale=0.75]{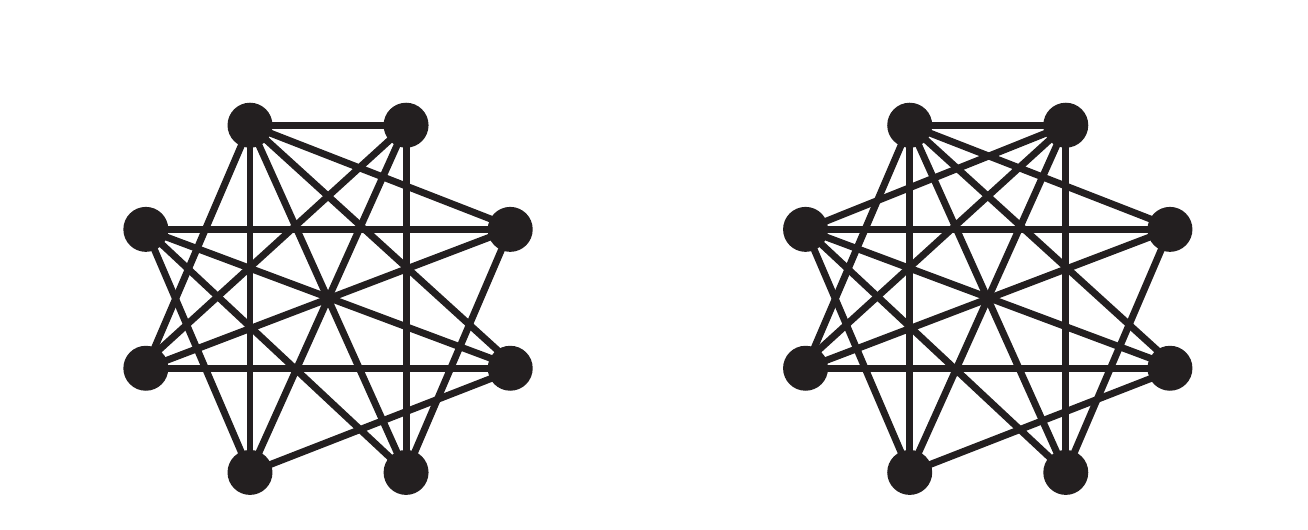}
\caption{The graphs of order $8$ whose $\sigma$-polynomials have nonreal roots.}
\label{smallorder}
\end{center}
\end{figure}

Finally, a well known result due to Newton (see \cite[pp. 270--271]{comtet}) states that if a real polynomial $\sum_{i=0}^{d} a_{i}x^{i}$ has only real roots then the sequence $a_{0},a_{1},\ldots,a_{d}$ is {\em log-concave}, that is, $a_{i}^{2} \geq a_{i-1}a_{i+1}$ for $i=1,\ldots,d-1$ (if a log concave sequence has no internal zeros, then it is unimodal in absolute value). Brenti \cite{brenti} posed the question of whether the coefficients of $\sigma$-polynomials of all graphs are log-concave. As a corollary of Theorem \ref{main n-3}, we obtain that the coefficients of $\sigma$ polynomials of all graphs with $\chi(G)\geq n-3$ are log-concave.

\vskip0.4in
\noindent {\bf \large Acknowledgments:} This research was partially supported by grants from NSERC. The authors would like to thank Gordon Royle for providing us with a list of all $6$-chromatic graphs of order $10$.

\bibliographystyle{elsarticle-num}
\bibliography{<your-bib-database>}

\end{document}